\pgfplotsset{compat=1.14}
\newif\iflongversion
\def\R{\mathbb{R}}
\def\A{\mathcal{A}}
\begin{document}

\title{Stability Analysis of Interface Conditions for Ocean-Atmosphere Coupling
\thanks{This material is based upon work supported by the U.S. Department of
Energy, Office of Science, Office of Advanced Scientific Computing
Research and Office of Biological and Environmental Research, Scientific
Discovery through Advanced Computing (SciDAC) program under contract
DE-AC02-06CH11357 through the Coupling Approaches for Next-Generation
Architectures
(CANGA) project.}}
\titlerunning{Stability of Ocean-Atmosphere Coupling}

\author{
  Hong Zhang
  \and
  Zhengyu Liu
  \and
  Emil Constantinescu
  \and
  Robert Jacob
}
\authorrunning{H. Zhang, Z. Liu, E. Constantinescu, R. Jacob}

\institute{\Letter H. Zhang \at
              Mathematics and Computer Science Division\\
              Argonne National Laboratory, Lemont, IL\\
              \email{hongzhang@anl.gov}
           \and
           Z. Liu \at
              Department of Geography\\
              Ohio State University, Columbus, OH\\
              \email{liu.7022@osu.edu}
           \and
           E. Constantinescu \at
              Mathematics and Computer Science Division\\
              Argonne National Laboratory, Lemont, IL\\
              \email{emconsta@anl.gov}
           \and
           R. Jacob \at
           Environmental Science Division\\
           Argonne National Laboratory, Lemont, IL\\
           \email{jacob@anl.gov}
}
\date{Received: date / Accepted: date}

\maketitle
\vspace{-1in}
{\bf Preprint ANL/MCS-P9224-0819}
\vspace{1in}

\begin{abstract}
In this paper we analyze the stability of different coupling strategies for
multidomain PDEs that arise in general circulation models used in climate
simulations. We focus on fully coupled ocean-atmosphere models that are needed
to represent and understand the complicated interactions of these two systems,
becoming increasingly important in climate change assessment in recent years.
Numerical stability issues typically arise because of different time-stepping
strategies applied to the coupled PDE system. In particular, the contributing
factors include using large time steps, lack of accurate interface flux, and
singe-iteration coupling. We investigate the stability of the coupled
ocean-atmosphere models for various interface conditions such as the
Dirichlet-Neumann condition and the bulk interface condition, which is unique to
climate modeling. By analyzing a simplified model, we demonstrate how the
parameterization of the bulk condition and other numerical and physical
parameters affect the coupling stability.
\keywords{stability analysis, coupled system, partitioned algorithm, ocean-atmosphere}
\subclass{65M12, 34D20, 76R50}
\end{abstract}

%%%%%%%%%%%%%%%%%%%%%%%%%%%%%
%
\section{Introduction}
%
% - some math intro
% - scientific motivation
% - preliminaries
%
%%%%%%%%%%%%%%%%%%%%%%%%%%%%
We analyze the stability of different coupling strategies for multidomain
partial differential equations (PDEs) motivated by general circulation models
used in climate simulations. Solving these problems with large time steps on
each of the domains is known to cause numerical stability issues of the coupled
PDE system. Without loss of generality we consider two coupled PDEs that
correspond to coupled ocean-atmosphere:
\begin{subequations}
\label{eq:abstract:problem}
\begin{align}
  \label{eq:abstract:problem:1}
  \frac{\partial u_1(t,x)}{\partial t} &= F_1(t,u_1(t,x),u_2(t,x)) &&\textnormal{[domain 1]}\\
  \label{eq:abstract:problem:2}
  \frac{\partial u_2(t,x)}{\partial t} &= F_2(t,u_1(t,x),u_2(t,x)) &&\textnormal{[domain 2]}\\
  \label{eq:abstract:problem:12}
  0&=G_{12}(u_1(t,x),u_2(t,x))\,, &&\textnormal{[interface 1-2]}\\
  \nonumber
  t \ge t_0\,,~ u_k \in \mathbb{R} \times \mathcal{D}_k\,,&~ F_k :
  \mathbb{R} \times \mathcal{D} \rightarrow  \mathcal{D}_k\,,~k=1,2\,\\
  \nonumber
   G_{12}: \mathcal{D} \rightarrow \mathcal{D}\,,&~
   \mathcal{D}=\bigcup_{k=1,2} \mathcal{D}_k
  %\cup \partial \mathcal{D}_{\lnot k} \,,~
  %k=1,2\,,G_{12} \in \partial \mathcal{D}_{1} \cup \partial \mathcal{D}_{2},
\end{align}
\end{subequations}
where, for instance, \eqref{eq:abstract:problem:1} represents the atmosphere,
\eqref{eq:abstract:problem:2} the ocean, and \eqref{eq:abstract:problem:12}
 the interaction between the two problems. Stability is defined as $||u^n(x)||
\le C ||u(t_0,x)||$, $\forall n>0$, where $C$ is a finite constant independent
of $t$ and $u=[u_1,u_2]^\top$.

This problem has been analyzed by normal mode analysis and matrix stability. The
normal mode method was originally developed by Godunov and Ryabenkij
\cite{Godunov1963}, Kriess \cite{Kreiss1968} and Osher \cite{Osher1969}, and
later led to the theory of Gustafsson, Kriess, and Sundstrom (GKS)
\cite{Gustafsson1972}, which establishes necessary and sufficient conditions
that the discretization schemes must satisfy in order to ensure stability.
% GKS theory was established for first-order 1D hyperbolic systems, and then
% extended for parabolic problems [Varah 1971],semidiscreteized euqations
% [Strikwerda 1980] and multidimensional problems [Michelson 1983]
We are interested in utilizing this tool to analyze stability of various
interface conditions for coupled climate models.

%%%%%%%%%%%%%%%%%%%%%%%%%%%%%%
\subsection{Scientific application}
Coupling methods have been a limiting factor in researchers' ability to address
science questions where the relevant processes are strongly coupled. Each model
inside the coupled system is often associated with different time scales, posing
a great challenge on time integration. The integration is becoming more
challenging as the coupled simulation codes are evolving to support high
resolution in time and space and concurrent execution of components. While the
time integration for each model is often well founded in theory, little work has
been done on characterizing the influence of different coupling strategies on
the stability of the fully coupled system.
% Trying to be less critical

\iflongversion
In the U.S. climate community models, including CESM \cite{Hurrell2013} and DOE
E3SM \cite{Golaz2019} (previously known as ACME), the Earth system components
(global atmosphere, global ocean, sea ice and land surface) are numerically
treated independently. The interaction among these components is done through a
component called a ``coupler'' which accommodates the information exchange. In
the E3SM coupler, the components of the Earth system model are marched forward
in time nearly independently from each other, with appropriate field information
exchanged using the lagged states at the previous time interval, also known as
``explicit flux coupling.'' While this popular coupling method, really more of a
decoupled method, enables concurrent execution of multiple components for
computational efficiency, in certain parameter regimes it has been shown to
induce numerical instability at the air-sea interface \cite{Lemarie2015},
air-land interface \cite{Beljaars2017}, and sea-ice interface
\cite{Hallberg2014,Roberts2015}. In the tropics where the atmosphere is very
sensitive to the sea surface forcing, the standard coupling strategy that
propagates the atmospheric model with multiple small explicit time steps
followed by a large ocean time step is physically reasonable
\cite{Bryan1996,Perlin2007,Bao2000}. In other regions such as the extratropics
\cite{Kushnir2002}, however, the atmospheric forcing is dominated by its own
internal variability, providing rapid feedback to the ocean via the turbulent
heat flux. Thus, using large time steps for the ocean can be problematic. On
long timescales (e.g., more than one year), the standard coupling method fails
to produce the correct ocean-atmosphere heat flux both in magnitude and in sign,
contributing to poor understanding of climate variability in these regions
\cite{Kushnir2002,BrethBattisti2000,Liu2007,Liu2004}. In \cite{Kushnir2002},
Kushnir et al. concluded that the thermal coupling coefficient should be a
function of latitude and season in order to enhance the predictability of
extratropical systems.

The occasional failure of decoupled methods can be attributed to the fact that
these methods perform a single step of an iterative process \cite{Lemarie2015},
which is insufficient to secure stable and accurate solutions. A theoretically
ideal solution for stability is to build a monolithic fully coupled system and
solve it implicitly. Doing so is difficult, however,  because it is intrusive in
nature, requiring significant  development effort to refactor existing codes;
and solving the full implicit system efficiently is also a computational
challenge. A more practical approach is for each component to compute interface
fluxes using lagged information from the other components and implicitly the
states owned by itself. In this way, partial implicitness has been added, thus
improving stability without the need for many iterations when compared with
explicit flux coupling; but the resulting system is not as stable as the one
using full implicitness \cite{Lemarie2015}. Rigorous analysis of decoupled
methods remains limited and does not provide sufficient understanding of optimal
choices for stability.

\else
The Earth system components
such as global atmosphere, global ocean, sea ice and land surface are numerically
treated independently by the U.S. climate community models
\cite{Hurrell2013,Golaz2019} with the information exchange done by a
component called ``coupler.'' The global time marching can be affected
by instabilities induced by the coupling procedure
\cite{Lemarie2015,Beljaars2017,Hallberg2014,Roberts2015}. In
particular, long time integration was shown to produce spurious
ocean-atmosphere heat flux
\cite{Kushnir2002,BrethBattisti2000,Liu2007,Liu2004}. The occasional failure of decoupled methods can be attributed to the fact that
these methods perform a single step of an iterative process \cite{Lemarie2015},
which is insufficient to secure stable and accurate solutions. An
ideal but impractical  solution for stability is to build a monolithic
fully coupled system. A more practical approach is to lag information
from the other components and thus
improve stability without the need for many iterations when compared with
explicit flux coupling; but the resulting system is not as stable as the one
using full implicitness \cite{Lemarie2015}. Rigorous analysis of decoupled
methods remains limited and does not provide sufficient understanding of optimal
choices for stability.
\fi

To improve the stability of surface models (e.g., snow, ice, or soil) that are
coupled to atmospheric models, Beljaars et al. \cite{Beljaars2017} proposed a
fully implicit formulation that uses an estimate of the surface temperature at
the new time level instead of solving for it. The estimate is obtained with an
empirical relation between surface heat flux and surface temperature that is
derived from idealized simulations of a fully coupled implicit system.  A matrix
stability analysis was preformed on the forced surface models; however, only an
empirical stability condition was given.

The stability limits in fluid-solid interaction (FSI) coupled models have been
widely studied, but little is known for climate models that are intrinsically
large scale and involve complex dynamics. Recently Connors and his colleagues
conducted a series of studies
\cite{Connors2009,Connors2012,Connors2011,Aggul2018} that provide rigorous
analysis of various partitioned algorithms for fluid-fluid interaction with a
focus on convergence and accuracy aspects. Peterson \cite{Peterson2019} et. al.
proposed a new synchronous partitioned method that eliminates the need to solve
a coupled implicit system, while not subject to additional stability constraints
as in traditional partitioned schemes. Although the approach is developed for
general transmission problem, it can be potentially extended to coupled climate
models.

This work studies the stability of different partitioned coupling methods for
ocean-atmosphere coupling, which involves the most computationally expensive
components in climate models. We examine the stability behaviors of common
interface conditions with a focus on the bulk interface condition that is
pervasive in coupled climate models. We show by analysis that partially implicit
and fully implicit flux coupling can lead to an unconditionally stable coupling
algorithm, whereas explicit flux coupling requires certain
Courant-Friedrichs-Lewy (CFL)-like conditions to be satisfied for stability. We
also derive the closed-form necessary and sufficient stability condition for a
one-way coupled system, and connect one-way and two-way coupled models in terms
of stability regions. The theoretical analyses are accompanied by numerical
experiments. The new results lead to better understanding of the stability
properties of the existing algorithms as well as provide guidance for developing
new stable coupling algorithms.

% numerical stability vs flow stability. In fluid dynamics, unstable flows will
% cause turbulence. numerical stability indicates the errors or initial
% perturbations do not grow in numerical algorithms. The length of the coupling
% interval is typically chosen based on characteristic time scales of interests
% or for practical reasons to enable models to run concurrently.

% important features of tropic cyclone events such as trajectory and intensity
% are mentioned in \cite{Lemarie2014}

\subsection{Model problem}
Since the vertical diffusion provides the strongest coupling between the
atmospheric boundary layer and the ocean, we consider a 1D diffusion equation
for temperature defined on two neighboring domains
\begin{subequations}
\begin{align}
\frac{\partial}{\partial t} T_+ = \frac{\partial}{\partial z} (K_+ \frac{\partial}{\partial z} T_+
% - w_+ T_+  % advection ignored for now
), \quad z > 0 \label{eq:1dadvdiff1} \\
\frac{\partial}{\partial t} T_- = \frac{\partial}{\partial z} (K_- \frac{\partial}{\partial z} T_-
 % - w_- T_- % advection ignored for now
 ), \quad z < 0
\label{eq:1dadvdiff2}
\end{align}
\end{subequations}
with the eddy diffusivity coefficient determined by
\begin{equation}
    K_\pm = \frac{\nu_\pm}{\rho_\pm c_\pm} ,
    % \quad K_+/K_- = h \quad w_+/w_- = h % advection ignored for now
\end{equation}
where $\nu$ and $\rho$ and $c$ are the heat
diffusion coefficient, the density, and the heat capacity, respectively.
% and $h$ is the ratio of heat capacity$*$density between ocean and atmosphere.
% In practice, we usually have $h=\mathcal{O}(1000)$.
The subscripts $+$ and $-$ correspond to atmosphere and ocean. respectively. In
this case $u_1=T_+$, $u_2=T_-$, $\mathcal{D}_1$ is the positive domain, and
$\mathcal{D}_2$ is the negative domain in the \eqref{eq:abstract:problem}.

In addition to the governing equations for each subsystem, the continuity of
fluxes at the interface is typically used as the coupling condition
\begin{equation}
    \rho_+ c_+ K_+ \frac{\partial}{\partial z} T_+ = \rho_- c_- K_- \frac{\partial}{\partial z} T_-\,,
\end{equation}
which corresponds to \eqref{eq:abstract:problem:12}.

\iflongversion
Note that the diffusion problem can also be represented in an equivalent form
\begin{subequations}
\begin{align*}
\rho_+ c_+ \frac{\partial}{\partial t} T_+ = \frac{\partial}{\partial z} (\nu_+ \frac{\partial}{\partial z} T_+), \quad z > 0  \\
\rho_- c_- \frac{\partial}{\partial t} T_- = \frac{\partial}{\partial z} (\nu_- \frac{\partial}{\partial z} T_-), \quad z < 0
%\label{eq:1ddiff}
\end{align*}
\end{subequations}
and the flux continuity becomes
\begin{equation*}
    \nu_+ \frac{\partial}{\partial z} T_+ = \nu_- \frac{\partial}{\partial z} T_- .
\end{equation*}
This alternative form makes the coupling condition simpler, but involves more
parameters in the implementation of the subsystems.
\fi

\subsection{Interface conditions}

The physical processes in the interior of each domain are independent of the
others; each domain-specific process interacts with processes in other domains
only at their common interface. Therefore, conditions need to be imposed at the
interface in order to guarantee that the numerical solution exists for the
coupled system.

In classical domain decomposition methods,  both the state and the
normal-direction flux often are required to be continuous across the interface.
When a partitioned approach is used, each domain is solved independently by
using boundary information coming from the other domains. The information is
often lagged in time if the domains are solved concurrently. A natural choice
for implementing the interface conditions is to treat them as boundary
conditions and apply Dirichlet or Neumann conditions to each subproblem.

% or Robin conditions which are a linear combination of the other two.
% Robin conditions define the boundary flux as
%\begin{equation}
%    Q = q + \alpha T \text{ with } q = -\nu \partial T /\partial z
%\end{equation}
% where $\alpha$ is a dimensional parameter and determines the stability and the rate of convergence.

% \begin{align}
% \text{BC on the atmosphere side: } q_+ = -q_- + \alpha_- (T_+ - T_-) \\
% \text{BC on the ocean side: } q_- = -q_+ + \alpha_+ (T_- - T_+)
% \end{align}
% It gives a general formulation of the following interface conditions.

% \begin{enumerate}
%     \item Dirichlet-Neumann conditions $(\alpha_+,\alpha_-) = (0,\infty)$
%     \item Dirichlet-Dirichlet conditions $(\alpha_+,\alpha_-) = (0,0)$
%     \item Neumann-Neumann conditions $(\alpha_+,\alpha_-) = (\infty,\infty)$
% \end{enumerate}

In a climate model, the surface fluxes of momentum, sensible heat, and latent
heat are calculated by using bulk formulas, following the Monin-Obukhov
similarity theory \cite{Yaglom_1994}. The physical processes near the surface
are parameterized as bulk transfer coefficients as studied in classical works
\cite{Liu1979,Smith1988,Fairall1996}. Through bulk transfer coefficients, a bulk
formula relates the flux to easily measured surface quantities such as averaged
wind speed, temperature, and humidity. The most popular bulk flux algorithm, the
Coupled Ocean–Atmosphere Response Experiment (COARE) algorithm
\cite{Fairall1996}, has been used in coupled air-sea \cite{Bao2000} and sea-ice
\cite{Andreas2010} simulations and has been continuously improved over wider
wind-speed range and more complicated physics such as weave feedback
\cite{Edson2013,Kudryavtsev2014}.

From a mathematical point of view, the bulk flux algorithm can be abstracted as
a special type of interface condition: the flux is continuous in the normal
direction at the interface, but the state can jump across the interface.
% COARE 3.5
%
% 1. momentum exchange
% \tau = \rho_a C_D U_r^2
% where \tau is the wind stress, \rho is the air density, C_D is the drag coefficient (transfer coefficient), U_r is the wind speed relative to water (i.e. the air-water velocity difference)
% for higher winds, there is an improved formula for drag coefficient.
% For example, relative wind speed can be computed from
% U_r = \sqrt{(U_x - U_x0)^2 + (U_y - U_y0)^2 }
% The subscript 0 identifies the surface current components. TThe surface currents are often neglected since they are assumed small compared to the surface winds. However, surface currents can be a significant fraction of the wind speed in a direction aligned with the wind direction(e.g. western boundary currents such as the Gulf Stream and Kuroshio)
%
% 2. sensible heat
% H_s = \rho_a c_p C_H S_r (\theta_s-\theta_r)
% where \theta_s is the potential temperature and \theta_r is the specific humidity at r.
%
% 3. latent heat
% H_L = \rho_a L_v C_E S_r (q_s-q_r)
% where L_v is the latent heat of sublimation.
%
% S_r can be replaced with U_r if U_r is not close to zero, in which case the transfer coefficient must approach infinity to maintain finite fluxes \cite{Andreas2010}.
% theoretical difficulty in extending the methods to the light-wind (cause a singularity in the formula), convective regime.
Without loss of generality, the bulk interface condition can be described as
\begin{equation}
    \nu_+ \frac{\partial}{\partial z} T_+ = \nu_- \frac{\partial}{\partial z} T_- = b (T_+ - T_-)
\end{equation}
with the bulk coefficient defined as
\begin{equation}
  b = \rho_+ c_+ C_H \|U\| ,
\end{equation}
where $C_H$ is the exchange coefficient at reference height $r$ (e.g., a typical
value is 10 meters) that depends on surface roughness and local stability and
$\|U\|$ is the absolute (wind) velocity. This parameter is defined in a bulk way
in the region between the lowest vertical level in the atmospheric model and the
shallowest vertical level in the oceanic model.
\iflongversion
Some authors in literature have described
the same interface condition based on a variable $\alpha$, which can be related
to $b$ by
\begin{equation}
   \alpha =  b/(\rho_+ c_+).
\end{equation}
For the convenience of denoting flux, we prefer the notation $b$. But using the
other notation should not affect the conclusions reached in this paper.   
\fi

\iflongversion
Consequently, the heat flow at the the interface can be computed as
\begin{equation}
     K_+ \frac{\partial}{\partial z} T_+ =  \rho_+ c_+ \alpha (T_+ - T_-),\quad  K_-  \frac{\partial}{\partial z} T_-  = \frac{\rho_- c_-}{\rho_+ c_+} \alpha (T_+ - T_-).
\end{equation}
\fi

\subsection{Partitioned coupling algorithms}

The classical partitioned approach is the convectional serial staggered (CSS)
algorithm that solves the two subsystems alternatively. We assume at time $t_n$
that the temperature fields $T_j^n$ are given and are consistent at the
interface. With a grid setup shown in Figure \ref{fig:dn_grid}, the algorithm
can be described compactly as
\begin{subequations}
  \begin{align}
  & T_{0_+}^n = T_{0_-}^n \label{eq:css1}\\
  & \rho_+ c_+ \frac{\partial}{\partial t} T_j = \frac{\nu_+}{(\Delta z_+)^2}  \left( T_{j+1} -2 T_j + T_{j-1}\right), \quad j>0 \label{eq:css2} \\
  & q_{\frac12} = \nu_+  \frac{T_1 - T_{0_+}}{\Delta z_+}, \quad q_{-\frac12} = \nu_-  \frac{T_{0_-}- T_{-1}}{\Delta z_-} \label{eq:css3} \\
  & \frac12 \left( \rho_- c_- \Delta z_- + \rho_+ c_+ \Delta z_+ \right) \frac{\partial}{\partial t} T_{0_-} = q_{\frac12} - q_{-\frac12},  \quad j=0 \label{eq:css4} \\
  & \rho_- c_- \frac{\partial}{\partial t} T_j = \frac{\nu_-}{(\Delta z_-)^2}  \left( T_{j+1} -2 T_j + T_{j-1}\right) \quad j<0 . \label{eq:css5}
  \end{align}
  \label{eq:css}
\end{subequations}
\begin{figure}[ht]
  \centering
  \resizebox{0.95\textwidth}{!}{
    \definecolor{sexdts}{rgb}{0.1803921568627451,0.49019607843137253,0.19607843137254902}
    \definecolor{rvwvcq}{rgb}{0.08235294117647059,0.396078431372549,0.7529411764705882}
  \definecolor{cqcqcq}{rgb}{0.7529411764705882,0.7529411764705882,0.7529411764705882}
  \tikzstyle{every node}=[font=\Large]
  \begin{tikzpicture}[line cap=round,line join=round,>=triangle 45,x=1cm,y=1cm]
  %\draw[help lines] (-7,-4) grid (16,4);
  \draw [line width=1pt] (-7,0)-- (10,0);
  \draw [line width=1pt,dash pattern=on 8pt off 8pt] (10,0)-- (12,0);
  \draw [->,line width=1pt] (12,0) -- node [above=4pt]{z} ++ (2,0);

  \draw [fill=orange] (-7,0) circle (4pt); \node [color=orange,above=4pt] at (-7,0) {-4};
  \draw [fill=orange] (-5,0) circle (4pt); \node [color=orange,above=4pt] at (-5,0) {-3};
  \draw [fill=orange] (-3,0) circle (4pt); \node [color=orange,above=4pt] at (-3,0) {-2};
  \draw [fill=orange] (-1,0) circle (4pt); \node [color=orange,above=4pt] at (-1,0) {-1};
  \draw [fill=orange] (1,0) circle (4pt); \node [color=orange,above right=4pt] at (1,0) {0};

  \draw [line width=1pt] (1,1)-- (1,-1);

  \draw [fill=rvwvcq] (4,0)  ++(-4pt,0 pt) -- ++(4pt,4pt)--++(4pt,-4pt)--++(-4pt,-4pt)--++(-4pt,4pt); \node [color=rvwvcq,above=4pt] at (4,0) {1};
  \draw [fill=rvwvcq] (7,0)  ++(-4pt,0 pt) -- ++(4pt,4pt)--++(4pt,-4pt)--++(-4pt,-4pt)--++(-4pt,4pt); \node [color=rvwvcq,above=4pt] at (7,0) {2};
  \draw [fill=rvwvcq] (10,0)  ++(-4pt,0 pt) -- ++(4pt,4pt)--++(4pt,-4pt)--++(-4pt,-4pt)--++(-4pt,4pt); \node [color=rvwvcq,above=4pt] at (10,0) {3};

  \draw [decorate,decoration={brace,amplitude=10pt,mirror,raise=4pt},yshift=-2pt](-7,0) -- (-5,0) node [black,midway,yshift=-0.8cm] {$\Delta z_-$};
  \draw [decorate,decoration={brace,amplitude=10pt,mirror,raise=4pt},yshift=-2pt](7,0) -- (10,0) node [black,midway,yshift=-0.8cm] {$\Delta z_+$};
  \end{tikzpicture}
  }
  \caption{Grid setting for Dirichlet-Neumann condition. Space is discretized on
  uniform grids with each node denoted by $z_j = j \Delta z_+ \quad j=0,1,2,\dots
  $ and $z_j = j \Delta z_- \quad j=0,-1,-2,\dots$ where $\Delta z_+$ and $\Delta
  z_-$ are the grid spacing on the left and right sides of the interface
  respectively. They may differ significantly.}
  \label{fig:dn_grid}
\end{figure}
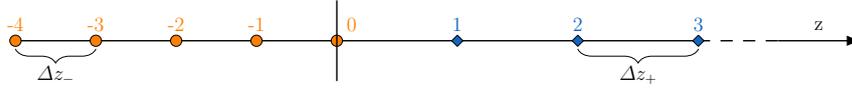

The procedure to advance the solution from $t_n$ to $t_{n+1}$ is as follows:
\begin{enumerate}
    \item At time $t_n$, transfer the temperature $T_{0_-}^n$ to the positive
    domain, and update its interface boundary with a Dirichlet condition
    \eqref{eq:css1}.
    \item Advance the solution on the right domain from $t_n$ to $t_{n+1}$
    \eqref{eq:css2},  and compute the flux at the interface \eqref{eq:css3}.
    \item Transfer the flux quantifies to the left domain, and update its
    interface boundary with a Neumann condition \eqref{eq:css4}.
    \item Advance the solution on the left domain \eqref{eq:css5}
\end{enumerate}

% Different discretization schemes such as finite difference methods and finite volume methods can be used to solve the coupled problems.
% When both subsystems are integrated explicitly in time, the partitioned approach is equivalent to the monolithic approach, and the two subsystems can be solved simultaneously.
% When they are integrated implicitly, the interface node equation \eqref{eq:css4} cannot be processed until the subsystem on the right domain is solved, because the flux $q_{\frac12}$ at time $t_{n+1}$ needs to be computed using the temperature information from the right domain according to \eqref{eq:css3}.

This algorithm can also be applied to Neumann-Neumann or bulk interface
conditions with a slightly different grid setting, as shown in Figure
\ref{fig:bulk_grid}. For the convenience of flux calculation, the interface is
positioned at the cell edge for both domains; in this way, each subdomain can
have uniform cells, but ghost cells need to be used to apply the boundary
conditions at the interface. We denote the ghost cell values for the left and
the right domains by $T_{0_+}$ and $T_{0_-}$, respectively.
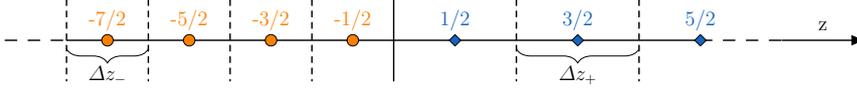
\begin{figure}[ht]
\centering
\resizebox{0.95\textwidth}{!}{
\definecolor{rvwvcq}{rgb}{0.08235294117647059,0.396078431372549,0.7529411764705882}
\definecolor{cqcqcq}{rgb}{0.7529411764705882,0.7529411764705882,0.7529411764705882}
\tikzstyle{every node}=[font=\Large]
\begin{tikzpicture}[line cap=round,line join=round,>=triangle 45,x=1cm,y=1cm]
\draw [line width=1pt,dash pattern=on 8pt off 8pt] (-7.5,0)-- (-6,0);
\draw [line width=1pt] (-6,0)-- (9.5,0);
\draw [line width=1pt,dash pattern=on 8pt off 8pt] (9.5,0)-- (11.5,0);
\draw [->,line width=1pt] (11.5,0) -- node [above=4pt]{z} ++ (2,0);

\draw [line width=1pt,dash pattern=on 4pt off 4pt] (-6,1)-- (-6,-1);
\draw [fill=orange] (-5,0) circle (4pt); \node [color=orange,above=4pt] at (-5,0) {-7/2};
\draw [line width=1pt,dash pattern=on 4pt off 4pt] (-4,1)-- (-4,-1);
\draw [fill=orange] (-3,0) circle (4pt); \node [color=orange,above=4pt] at (-3,0) {-5/2};
\draw [line width=1pt,dash pattern=on 4pt off 4pt] (-2,1)-- (-2,-1);
\draw [fill=orange] (-1,0) circle (4pt); \node [color=orange,above=4pt] at (-1,0) {-3/2};
\draw [line width=1pt,dash pattern=on 4pt off 4pt] (0,1)-- (0,-1);
\draw [fill=orange] (1,0) circle (4pt); \node [color=orange,above=4pt] at (1,0) {-1/2};

\draw [line width=1pt] (2,1)-- (2,-1);

\draw [fill=rvwvcq] (3.5,0)  ++(-4pt,0 pt) -- ++(4pt,4pt)--++(4pt,-4pt)--++(-4pt,-4pt)--++(-4pt,4pt); \node [color=rvwvcq,above=4pt] at (3.5,0) {1/2};
\draw [line width=1pt,dash pattern=on 4pt off 4pt] (5,1)-- (5,-1);
\draw [fill=rvwvcq] (6.5,0)  ++(-4pt,0 pt) -- ++(4pt,4pt)--++(4pt,-4pt)--++(-4pt,-4pt)--++(-4pt,4pt); \node [color=rvwvcq,above=4pt] at (6.5,0) {3/2};
\draw [line width=1pt,dash pattern=on 4pt off 4pt] (8,1)-- (8,-1);
\draw [fill=rvwvcq] (9.5,0)  ++(-4pt,0 pt) -- ++(4pt,4pt)--++(4pt,-4pt)--++(-4pt,-4pt)--++(-4pt,4pt); \node [color=rvwvcq,above=4pt] at (9.5,0) {5/2};

\draw [decorate,decoration={brace,amplitude=10pt,mirror,raise=4pt},yshift=-2pt](-6,0) -- (-4,0) node [black,midway,yshift=-0.8cm] {$\Delta z_-$};
\draw [decorate,decoration={brace,amplitude=10pt,mirror,raise=4pt},yshift=-2pt](5,0) -- (8,0) node [black,midway,yshift=-0.8cm] {$\Delta z_+$};
\end{tikzpicture}
} \caption{Grid setting for bulk and Neumann-Neumann interface conditions. Space
is discretized on uniform grids with each node denoted by $z_{j+1/2} = (j+1/2)
\Delta z_+ \quad j=0,1,2,\dots $ and $z_{j+1/2} = (j+1/2) \Delta z_- \quad
j=-1,-2,\dots$ where $\Delta z_+$ and $\Delta z_-$ are the grid spacing on the
left and right sides of the interface respectively. They may differ
significantly.}
\label{fig:bulk_grid}
\end{figure}

After spatial discretization, the following system of ordinary differential
equations is obtained:
\begin{subequations}
  \begin{align}
  & q_{0_+} = b (T_{\frac12} - T_{-\frac12} ) \label{eq:fv_mon1} \\
  & \frac{\partial}{\partial t} T_{j+\frac12} = \frac{1}{\rho_+ c_+ \Delta z_+}  \left( q_{j+1} - q_{j} \right), \quad j \geq 0 \label{eq:fv_mon2} \\
  & q_{0_-} = q_{0_+} \label{eq:fv_mon3} \\
  & \frac{\partial}{\partial t} T_{j+\frac12} = \frac{1}{\rho_- c_- \Delta z_-}  \left(  q_{j+1} - q_{j}  \right), \quad j<0 . \label{eq:fv_mon4}
  \end{align}
  \label{eq:css_bulk}
\end{subequations}

For simplicity, we will focus on backward Euler when referring to the implicit
time stepping method, unless otherwise noted. Extending to high-order time
integration schemes is possible but complicates the algebraic analysis
considerably. %\EC{Should say this thing in the abstract, too!}

\subsection{Manuscript organization}

The rest of this paper is organized as follows. Normal mode stability analysis
for two-way coupled models using Dirichlet-Neumann and bulk interface conditions
is given in Sections \ref{sec:dn} and \ref{sec:bulk}. Section
\ref{sec:forced_model} applies the analysis to a one-way coupled model that can
be considered as a limiting case for two-way coupled models. Section
\ref{sec:validation} describes how to validate the analysis using matrix
stability and provides the numerical results. In Section 6, we summarize the
results and their implications to practical applications.

%%%%%%%%%%%%%%%%%%%%
%\subsection{Statement of the problem and outline of our results}
%%%%%%%%%%%%%%%%%%%
%\EC{We need to enumerate the results and types of stability
%  analysis. Something to the effect of....}

\section{Normal mode stability analysis for Dirichlet-Neumann condition}\label{sec:dn}
In the normal mode method, the PDE solution is represented as a certain mode,
which is typically exponential in time and space. For example, the numerical
solution of one-dimensional PDE can be assumed in the form
\begin{equation}
T_j^b = \A^n \phi_j,
\end{equation}
where $\A$ is an eigenvalue with eigenfunction $\phi_j$ (bounded as $j$ goes to
infinity). Then one needs to solve a system of algebraic equations, resulting
directly from numerical discretization and boundary (or interface) conditions,
for the amplitude of the mode.

Based on this technique, the GKS theory \cite{Gustafsson1972} establishes a
necessary and sufficient condition for stability, as summarized below.
\begin{proposition}
  The approximation scheme is stable in the GKS sense if and only if no
  nontrival eigensolution exists for $|\A|\geq 1$.
  \label{prop:GKS}
\end{proposition}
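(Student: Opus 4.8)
The plan is to recast this classical GKS result through a transform in time, turning the stability question for the fully discrete initial--boundary value problem into a resolvent estimate parameterized by a single complex variable. First I would apply the Laplace (equivalently, the generating-function / $Z$-) transform in the time index to the interior scheme and to the interface conditions, identifying the transform variable with the amplification factor $\A$. For each fixed $\A$ the interior difference operator becomes a constant-coefficient recurrence in the spatial index $j$, whose solutions bounded as $j\to\infty$ are spanned by the spatial characteristic roots $\kappa(\A)$ lying strictly inside the unit disk (with a separate accounting of roots on $|\kappa|=1$). Substituting a combination of these admissible decaying modes into the transformed interface and boundary relations produces a finite linear system for the mode amplitudes; its coefficient matrix has a determinant $\det E(\A)$ --- a Lopatinskii-type boundary determinant --- whose vanishing is, by construction, precisely the existence of a nontrivial bounded eigensolution $T_j^n=\A^n\phi_j$.

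With this reduction in hand, I would prove the two implications separately. \textbf{Necessity:} suppose $\det E(\A_0)=0$ for some $\A_0$ with $|\A_0|\ge 1$. Then a nontrivial bounded $\phi_j$ solves the homogeneous transformed problem, so $T_j^n=\A_0^n\phi_j$ is an exact solution of the scheme that fails to decay and, when $|\A_0|>1$, grows geometrically. Localizing and truncating this mode to genuine $\ell^2$ initial data yields a family of solutions violating the stability bound $\|u^n\|\le C\,\|u^0\|$ with $C$ independent of $n$; hence the scheme is unstable in the GKS sense.

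\textbf{Sufficiency:} if $\det E(\A)\neq 0$ for every $\A$ with $|\A|\ge 1$, the transformed problem is uniquely solvable and the associated resolvent admits a uniform bound on $|\A|\ge 1$. Integrating that bound around the circle $|\A|=1$ and invoking Parseval's identity for the $Z$-transform converts the resolvent estimate back into the summed-in-time stability bound with a finite constant $C$, which is exactly GKS stability.

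The hard part will be the sufficiency estimate as $\A$ approaches the unit circle, the boundary of the stability region. There the spatial roots $\kappa(\A)$ may coalesce or migrate onto $|\kappa|=1$ (the generalized-eigenvalue case), so $\det E(\A)$ can degenerate and the naive resolvent bound threatens to blow up. Controlling this requires the Kreiss machinery --- constructing a smooth symmetrizer, or carrying out a careful Puiseux/perturbation expansion of $\kappa(\A)$ and of the associated mode vectors --- to show that the apparent singularity at $|\A|=1$ is removable exactly when no eigensolution sits on the circle. Establishing this uniform boundedness \emph{up to} the boundary, the step originally executed by Gustafsson, Kriess, and Sundström, is the technical heart of the argument; once it is secured, the remainder is bookkeeping with the transform and Parseval's relation.
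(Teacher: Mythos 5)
The paper does not prove this proposition at all: it is stated as a quotation of the classical Gustafsson--Kreiss--Sundstr\"om result, with the text immediately below it pointing the reader to Definition~3.3 and Theorem~5.1 of \cite{Gustafsson1972}. So there is no in-paper argument to compare yours against; the relevant benchmark is the original GKS proof. Measured against that, your outline is faithful to the classical route --- $Z$-transform in time, reduction to a one-parameter family of spatial recurrences, admissible decaying roots $\kappa(\A)$, a Lopatinskii-type boundary determinant whose zeros are exactly the nontrivial eigensolutions, necessity by truncating a growing mode, sufficiency by a resolvent bound plus Parseval. You also correctly locate the technical heart: the uniform bound as $|\A|\to 1$, where roots coalesce or land on $|\kappa|=1$ and the determinant can degenerate.

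The gap is that you name this hard step but do not execute it, and it is not bookkeeping --- it is the entire content of the theorem. Two points in particular are left open. First, the precise GKS condition excludes not only genuine eigensolutions for $|\A|\ge 1$ but also \emph{generalized} eigensolutions at $|\A|=1$ (modes with $|\kappa|=1$ that are limits of admissible decaying modes from $|\A|>1$); your phrase ``a separate accounting of roots on $|\kappa|=1$'' gestures at this but the equivalence between ``no (generalized) eigensolution'' and ``uniform invertibility of $E(\A)$ up to the boundary'' is exactly what must be proved, via the Kreiss symmetrizer or a Puiseux analysis of the root branches. Second, the passage from a resolvent estimate on $|\A|=1$ back to the time-domain bound requires the specific GKS norms (the estimate is summed over time and localized at the boundary, not a plain $\ell^2$-in-space bound at each step), so ``integrating around the circle and invoking Parseval'' hides a nontrivial choice of function spaces. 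As a roadmap your proposal is the right one; as a proof it defers its only difficult step to the very reference the paper cites.
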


The stability definition is given in Definition 3.3 in \cite{Gustafsson1972},
and  Proposition \ref{prop:GKS} refers to Theorem 5.1 in Section 5 in
\cite{Gustafsson1972}. Although the original theory was described for
quarter-plane problems, it can be naturally applied to problems with multiple
boundaries and interface problems such as FSI.

Giles has shown in \cite{Giles1997} that for FSI problems that the key point to
achieving numerical stability is to transfer the interface temperature from the
solid to the fluid (Dirichlet condition) and to pass the heat flux (Neumann
condition) from the fluid to the solid. Roux and Garaud confirmed in
\cite{Roux2009} that Dirichlet conditions must be imposed in the domain with
lower conductivity.

Lemarie et al. also followed Giles' partitioned approach in \cite{Lemarie2015}.
The main difference between Giles' approach and the one considered in this paper
is the former is derived based on an engineering consideration---a half-sized
cell is used at the interface so that each domain can be treated independently.
Consequently, the partitioned approach solves a slightly different model from
the one targeted by a monolithic approach. In this section, we present a normal
mode analysis for a partitioned approach without using half-sized cell. Both
explicit and implicit time-stepping schemes will be considered in order to
provide a direct comparison with  results in \cite{Giles1997}.

\subsection{Explicit flux coupling with explict time stepping}
% Describe the version derived directly from integration across the interface cell

Following the explicit methods (as opposed to implicit) of Giles
\cite{Giles1997}, we calculate the interfacial flux \eqref{eq:css3} for the time
step from $t_n$ with
\begin{equation}
  q_{\frac12}^{n} = \nu_+  \frac{T_1^n - T_{0_+}^n}{\Delta z_+},  \quad q_{-\frac12}^n = \nu_-  \frac{T_{0_-}^n - T_{-1}^n}{\Delta z_-}
  \label{eq:ex_interfacial_flux}
\end{equation}
and assume a solution of the form
\begin{align}
  \label{eq:sol_form_Giles}
T^n_j = \begin{cases}
\A^n \kappa_-^j, & j = 0, -1, -2, \ldots \\
\A^n \kappa_+^j, & j = 1,2, \ldots,
\end{cases}
\end{align}
where $j$ denotes exponent, $\kappa^j$ are bounded functions of space, and $\A$ is
a complex scalar. We define the following constants:
\begin{equation}
r = \frac{\rho_+ c_+ \Delta z_+}{\rho_- c_- \Delta z_-}, \quad \quad d_\pm = \frac{\nu_\pm \Delta t}{\rho_\pm c_\pm (\Delta z_\pm)^2}.
\label{eq:def}
\end{equation}

Discretizing \eqref{eq:css} explicitly with the coupling flux being computed
according to the formula \eqref{eq:ex_interfacial_flux}, substituting the
solution \eqref{eq:sol_form_Giles} into the discretized equations, and using
\eqref{eq:def} to simplify expression, we have
\begin{subequations}
  \begin{align}
  \A &= 1+d_+(\kappa_+-2+\kappa_+^{-1}) \label{eq:A1} \\
  (\A-1) (1+r) &= 2d_+r(\kappa_+-1) - 2d_-(1-\kappa_-^{-1})\label{eq:A2}\\
  \A & = 1+d_-(\kappa_--2+\kappa_-^{-1})\label{eq:A3}.
  \end{align}
\end{subequations}
Then \eqref{eq:A1} leads to
\begin{equation}
 \kappa_+ ^2 - \left( 2 + \frac{\A-1}{d_+} \right)  \kappa_+ + 1 = 0 .
\end{equation}

In the region $|\A| > 1$, there are two roots, and their product is $1$; thus,
one of the two roots must meet $|\kappa_+| > 1$ and the other must meet
$|\kappa_+| <1$. We are interested only in the latter, so that the far-field
boundary condition is satisfied. Thus, we pick the solution
% consider two cases:
% 1) when $\A$ is real: the radical in the square root must be $>0$. It is necessary to pick negative square root
% 2) when $\A$ is complex
\begin{equation}
\kappa_+ = 1+ \frac{\A-1}{2d_+}\left(1-\sqrt{1+\frac{4d_+}{\A-1}} \right).
\end{equation}

% Solving \eqref{eq:A1} and \eqref{eq:A3} for $\kappa_+$ and $\kappa_-^{-1}$, respectively, and choosing the negative roots to satisfy the far-field boundary conditions, we have
Similarly for \eqref{eq:A3} we choose the solution
\begin{equation}
\kappa_-^{-1} = 1+ \frac{\A-1}{2d_-}\left(1-\sqrt{1+\frac{4d_-}{\A-1}} \right).
\end{equation}
Plugging these into \eqref{eq:A2} and simplifying the expression, we obtain
\begin{equation}
1 +r = \left( 1-\sqrt{1+ \frac{4d_-}{\A-1}} \right) + r \left( 1- \sqrt{1+ \frac{4d_+}{\A-1}} \right) .
\end{equation}

When $\A$ is real, the radicals must be zero. And the requirement that $\A \leq
1$ leads to $0 < d_+,d_- \leq \frac12$, which agrees with the classical CFL
condition that can be obtained by applying von Neumann stability analysis to an
uncoupled system. If $\A$ is complex, the imaginary parts of the two terms on
the right-hand side must cancel out, and their real parts must be zero. Thus the
radicals must be real, which contradicts with the assumption. Hence, there is no
stability loss in the coupling, and stability is guaranteed when classical CFL
conditions are satisfied on each domain regardless of the value of $r$.

\subsection{Explicit flux coupling with implicit time stepping}

Solving the interior equations implicitly and updating the interface data
explicitly, we have the following algorithm:
\begin{subequations}
  \begin{align}
  & T_{0_+}^{n+1} = T_{0_-}^n  \label{eq:implicit_css1}\\
  & T_j^{n+1}-T_j^{n} = \frac{\Delta t \, \nu_+}{\rho_+ c_+ (\Delta z_+)^2}  \left( T_{j+1}^{n+1} -2 T_j^{n+1} + T_{j-1}^{n+1}\right), \quad j>0 \label{eq:implicit_css2} \\
  &  T_{0_-}^{n+1}-T_{0_-}^{n} = \frac{2 \Delta t}{\rho_- c_- \Delta z_- + \rho_+ c_+ \Delta z_+}  \left(\nu_+  \frac{T_1^n - T_{0_+}^n}{\Delta z_+} - \nu_-  \frac{T_{0_-}^{n+1} - T_{-1}^{n+1} }{\Delta z_-} \right) \label{eq:implicit_css4} \\
  & T_j^{n+1}-T_j^{n} = \frac{ \Delta t \nu_-}{\rho_- c_- (\Delta z_-)^2}  \left( T_{j+1}^{n+1} -2 T_j^{n+1} + T_{j-1}^{n+1} \right), \quad j<0. \label{eq:implicit_css5}
  \end{align}
  \label{eq:implicit_css}
\end{subequations}

Assume a solution is of the form
\begin{equation*}
 T_j^n = \begin{cases} \A^n \kappa_-^j, & j = 0_-, -1, -2, \ldots \\
\A^{n-1} \kappa_+^j, & j = 0_+, 1,2, \ldots \end{cases}
\end{equation*}
so that \eqref{eq:implicit_css1} is automatically satisfied by this choice of
normal mode. The other three equations require that $\A, \kappa_-,$ and
$\kappa_+$ satisfy
\begin{align}
\begin{split}
1 & = \A^{-1} + d_+(\kappa_+ - 2 + \kappa_+^{-1}) \\
(1 - \A^{-1})(1+r) & =  2 d_+r \A^{-2}(\kappa_+-1) - 2d_- (1-\kappa_-^{-1}) \\
1 & = \A^{-1} + d_-(\kappa_- - 2 + \kappa_-^{-1}). \\
\end{split}
\label{eq:sim_implicit_css}
\end{align}

The first equation leads to
\begin{equation}
\kappa_+ ^2 - \left( 2 + \frac{1-\A^{-1}}{d_+} \right) \kappa_+ + 1 = 0 .
\end{equation}
In the region $|\A|>1$, there are two real roots with one of them being
$|\kappa_+|>1$ and the other being $|\kappa_+|<1$. Again we choose the latter,
which can be written as
\begin{equation}
 \kappa_+ = 1+ \frac{1-\A^{-1}}{2d_+}\left( 1  - \sqrt{1+ \frac{4d_+}{1-\A^{-1}}}\right).
\end{equation}

%Solving the first and third equations for $\kappa_-$ and $\kappa_+$, respectively, and choosing the negative roots as before, we find
The third equation gives
\begin{equation}
\kappa_-^{-1} = 1+ \frac{1-\A^{-1}}{2d_-}\left( 1  - \sqrt{1+ \frac{4d_-}{1-\A^{-1}}}\right).
\end{equation}
Plugging these expressions into the second equation in
\eqref{eq:sim_implicit_css}
\iflongversion
gives
\begin{equation}
(1-\A^{-1})(1+r) = 2d_+r \A^{-2} \left(\frac{1-\A^{-1}}{2d_+}\left( 1  - \sqrt{1+ \frac{4d_+}{1-\A^{-1}}}\right) \right) - 2d_- \left(-  \frac{1-\A^{-1}}{2d_-}\left( 1  - \sqrt{1+ \frac{4d_-}{1-\A^{-1}}}\right) \right),
\end{equation}
which simplifies to
\else
and simplifying lead to
\fi
\begin{equation}
1 + r = r  \A^{-2} \left(1-\sqrt{1+\frac{4d_+}{1-\A^{-1}}} \right) +   \left(1-\sqrt{1+\frac{4d_-}{1-\A^{-1}}} \right) .
\label{eq:beuler_stab}
\end{equation}

Again, we consider asymptotic solutions.
\begin{itemize}
\item If $\Delta z_+ \ll \Delta z_-$, then $r \rightarrow 0$. In this case, we
are left with  $\sqrt{1+ \frac{4d_-}{1-\A^{-1}}} \approx 0$. Solving for $\A$
yields  $|\A| \approx \frac1{4d_-+1}< 1$. It follows that the scheme is
unconditionally stable.

\item If $\Delta z_- \ll \Delta z_+$, then $r \rightarrow \infty$. In this case, we have
\[
1 \approx \A^{-2} - \A^{-2}\sqrt{1 + \frac{4d_+}{1-\A^{-1}}}.
\]

After some algebra, it follows that $\A(-\A^4 + \A^3 + 2\A^2 - 2\A + 4d_+)
\approx 0$. Consider a special case $d_+ = 1$. We can easily find that all four
nonzero roots have a magnitude larger than one. Thus, the scheme is not
unconditionally stable. This is an interesting conclusion in an extreme scenario
and will be further examined in the next section.

% \item If $d_+, d_- \gg 1$, then the $\frac{4d_\pm}{1-\A^{-1}}$ terms dominate and we have
% \begin{equation}
% r  (\A^{-2})\left(\sqrt{\frac{4d_+}{1-\A^{-1}}} \right) \approx -   \left(\sqrt{\frac{4d_-}{1-\A^{-1}}} \right).
% \end{equation}

% It follows that
% \begin{equation}
% \A^{2} \approx - r \left( \sqrt{ \frac{d_+}{d_-}} \right).
% \end{equation}
% The stability limit requires $|\A|< 1$. It follows that $r < \sqrt{ \frac{d_-}{d_+}}$, i.e. $\sqrt{\nu_-/\nu_+} > 1$, or $\nu_- > \nu_+$.

\end{itemize}

\subsection{Discussion and comparison with Giles' results}
Giles considered a slightly different partitioned approach in \cite{Giles1997}.
In his approach, \eqref{eq:css4} is replaced with
\begin{equation}
  \frac12 \rho_- c_- \Delta z_- \frac{\partial}{\partial t} T_{0_-} = q_0 - q_{-\frac12},  \quad j=0, \label{eq:giles}
\end{equation}
where $q_0$ is computed in the same way as $q_\frac12$ in \eqref{eq:css3} but
deemed a one-sided approximation of the flux at the interface. The formula is
derived by using finite volume scheme over the half-sized cell
$[z_{-\frac12},0]$. By comparing \eqref{eq:giles} with \eqref{eq:css4}, we can
see that the only difference is the omission of $\rho_+ c_+ \Delta z_+$. The
impact on stability seems to depend on how $\Delta z_+$ compares with $\Delta
z_-$. Thus, Giles related the coupling stability to the parameter $r$ in
\eqref{eq:def} and investigated three algorithms: an explicit algorithm, an
implicit algorithm, and a hybrid algorithm (treating one domain explicitly and
the other domain implicitly). For all the algorithms, the following conclusions
were drawn:
\begin{itemize}
  \item When $r \ll 1$, the coupling is stable.
  \item When $r \gg 1$, the coupling is unstable.
  \iflongversion
  \item The stability condition is
  \begin{equation}
    r < \frac{1-2d_-}{1-\sqrt{1-2}},
  \end{equation}
  where $d$ is the Courant number $v\Delta t /\Delta z^2$. Note that this is
  established under the assumption that Dirichlet boundary condition is used on
  the positive domain (usually represents fluid) and the Neumann boundary
  condition  is used on the other (usually representing a solid). In practice,
  the fluid computation has higher resolution than the solid computation has,
  leading to a stable coupling. But if one uses the Dirichlet boundary condition
  for the solid and the Neumann boundary condition for the fluid, $r$ is the
  inverse of the original quantity and have a very large value. Then the
  coupling is not stable unless extremely small stepsize is used.
  \item The stability can be improved by using backward Euler for $j\le0$.
  The stability condition would be
  \begin{equation}
  r < \frac{1+2d_-}{1-\sqrt{1-2d_+}}.
  \end{equation}
  \item If both domains are treated implicitly but the interface data is
  explicitly updated (e.g., using time lagged information to promote
  parallelism), the asymptotic stability condition for $d_-,d_+ \gg 1$ becomes
  \begin{equation}
  r < \sqrt{\frac{d_-}{d_+}}.
  \end{equation}
  \else
  \item The stability conditions can be expressed as an upper bound of $r$ in terms of $d_-$ and $d_+$.
  \fi
 \end{itemize}

In contrast, we consider contributions from both domains for the interface node.
The analysis of the explicit algorithm with explicit flux coupling shows that
the coupling stability is not affected by $r$ in our setup for the interface
node. For the implicit algorithm \eqref{eq:css}, our analysis leads to the same
conclusions as Giles predicted; however, the causes of instability in the two
algorithms are different. In Giles' algorithm, the contribution from the
dominant domain (positive domain if $r \gg 1$) is not fully accounted for at the
interface since the term $\rho_+ c_+ \Delta z_+$ is missing. With the influence
of this missing term excluded, it is easier to see in our framework that the
explicit coupling flux passed to the negative domain as Neumann boundary
condition dominates the right-hand side of \eqref{eq:implicit_css4}, changing
the discretization of this equation from a hybrid scheme to an explicit scheme.
To summarize the new findings, the choice of Dirichlet condition or Neumann
condition on each domain is important in order to determine the coupling
stability when explicit updating of the interfacial flux is used in implicit
methods, but it plays no role for purely explicit algorithms.

\section{Normal mode stability analysis for bulk interface condition}\label{sec:bulk}

We have shown that even using an implicit time-stepping Dirichlet-Neumann
condition and explicit flux coupling can lead to instability, regardless of  how
the interface condition is imposed at the interface. In this section, we show
how the stability properties of the coupling methods are affected in several
ways where the interfacial flux \eqref{eq:fv_mon1} is computed when the bulk
interface condition is imposed. In order to relax the stability constraint due
to explicit flux coupling, a natural choice is to add implicitness. Thus, we
consider the following algorithms based on the degree of implicitness used in
the flux computation:
\begin{enumerate}
    \item Explicit flux coupling
    \begin{equation}
    q_{0_+}^{n+1} = b (T_{\frac12}^n - T_{-\frac12}^n ).
    \end{equation}
    \item Partially implicit flux coupling
    \begin{equation}
    q_{0_+}^{n+1} = b (T_{\frac12}^{n+1} - T_{-\frac12}^n ).
    \end{equation}
    \item Implicit flux coupling
    \begin{equation}
    q_{0_+}^{n+1} = b (T_{\frac12}^{n+1} - T_{-\frac12}^{n+1} ).
    \end{equation}
\end{enumerate}

Explicit flux coupling and partial flux coupling are convenient for parallel
computing and easy to implement, requiring minimal modifications to existing
codes that support the Neumann boundary condition. Data transfer between the
coupling components is needed only at the beginning of a time step; the
frequency is determined by the larger of the stepsizes for each model. The fully
implicit treatment of the interfacial flux requires the solution of a monolithic
system, which normally indicates more data exchange and synchronization and,
more important, tremendous difficulties in developing partitioned algorithms
that can solve the equations in both domains simultaneously. However, a
sequentially implicit formulation (e.g., \cite{Farhat2006}) allows the
monolithic system to be solved in a partitioned manner (as described by the CSS
algorithm \eqref{eq:css}), while maintaining the same stability as a fully
implicit method \cite{Farhat2006}. This strategy has been widely used for
loosely coupled problems and has a rich literature (e.g.,
\cite{Causin2005,Bazilevs2008,Degroote2009}). Since the stability for the fully implicit case is obvious, the analysis will not be repeated in this paper.

\subsection{Explicit flux coupling with implicit time stepping}\label{sec:exfluximts}
Applying backward Euler to the equation \eqref{eq:css_bulk} gives
\begin{subequations}
  \begin{align}
  & q_{0_+}^{n+1} = b (T_{\frac12}^n - T_{-\frac12}^n ) \label{eq:implicit_bulk1} \\
  & T_{j+\frac12}^{n+1} - T_{j+\frac12}^n = \frac{\Delta t}{\rho_+ c_+ \Delta z_+}  \left( q_{j+1}^{n+1} - q_{j}^{n+1} \right), \quad j \geq 0 \label{eq:implicit_bulk2} \\
  & q_{0_-}^{n+1} = q_{0_+}^n \label{eq:implicit_bulk3} \\
  & T_{j+\frac12}^{n+1} - T_{j+\frac12}^n = \frac{\Delta t}{\rho_- c_- \Delta z_-}  \left(  q_{j+1}^{n+1} - q_{j}^{n+1}  \right), \quad j<0 .\label{eq:implicit_bulk4}
  \end{align}
  \label{eq:implicit_bulk}
\end{subequations}

\iflongversion
The two equations at the interface are
\begin{subequations}
  \begin{align}
  & T_{\frac12}^{n+1} - T_{\frac12}^n = \frac{\Delta t}{\rho_+ c_+ \Delta z_+}  \left( \nu_+ \frac{T_\frac32^{n+1} - T_\frac12^{n+1}}{\Delta z_+} - b (T_\frac12^n -T_{-\frac12}^n)  \right) \\
  & T_{-\frac12}^{n+1} - T_{-\frac12}^n = \frac{\Delta t}{\rho_- c_- \Delta z_-}  \left( b (T_\frac12^n -T_{-\frac12}^n) - \nu_-\frac{T_{-\frac12}^{n+1}-T_{-\frac32}^{n+1}}{\Delta z_-}  \right)
  \end{align}
\end{subequations}
\fi

We consider the normal mode solution
\begin{equation}
 T_{j+\frac12}^n = \begin{cases} \A^n \kappa_-^j, & j = -1, -2, \ldots \\
\A^n \kappa_+^{j+1}, & j = 0,1,2, \ldots . \end{cases}
\label{eq:bulk_normal_mode_solution}
\end{equation}
Substituting it into Equation \eqref{eq:implicit_bulk}, we have
\begin{align}
\begin{split}
1 - \A^{-1}  & = d_+(\kappa_+ - 2 + \kappa_+^{-1}) \\
1 - \A^{-1} & =  d_+(\kappa_+ - 1) - d_+ \Delta z_+ b/\nu_+ \A^{-1} \left( 1- \kappa_-^{-1} \kappa_+^{-1}\right) \\
1 - \A^{-1} & =  d_- \Delta z_- b/\nu_- \A^{-1} \left( \kappa_+ \kappa_- - 1\right) - d_-(1- \kappa_-^{-1}) \\
1 - \A^{-1}  & = d_-(\kappa_- - 2 + \kappa_-^{-1}), \\
\end{split}
\label{eq:sim_implicit_bulk}
\end{align}
where the second and the fourth equations correspond to the cases $j=0$ and
$j=-1$ in \eqref{eq:implicit_bulk}, respectively.
% for stability, we want nontrival solution does not exist for $|\A|<1$
Solving the first and the last equations for $\kappa_+$ and $\kappa_-$,
respectively, and choosing the proper roots as before, we get
\begin{equation}
\begin{aligned}
 \kappa_+ = 1 + s_+ - \sqrt{s_+^2 + 2 s_+} \\
 \kappa_-^{-1} = 1 + s_- - \sqrt{s_-^2 + 2 s_-},
\end{aligned}
\label{eq:sim_implicit_bulk_k}
\end{equation}
where
\begin{equation}
s_\pm = \frac{1 - \A^{-1}}{2 d_\pm}.
\end{equation}
The second and third equations of \eqref{eq:sim_implicit_bulk} lead to
\begin{equation}
  \A
  = \frac{1+ \beta_+ \left( 1 - \kappa_-^{-1} \kappa_+^{-1} \right)}{1- d_+ \left( \kappa_+ -1 \right)}
\label{eq:A_exflux_bulk2}
\end{equation}
and
\begin{equation}
  \A
  = \frac{1+ \beta_- \left( 1 - \kappa_- \kappa_+ \right)}{1- d_- \left( \kappa_-^{-1} -1 \right)} ,
  \label{eq:A_exflux_bulk3}
\end{equation}
respectively, where for notational convenience we define
\begin{equation}
  \beta_\pm = \frac{b \Delta t} {\rho_\pm c_\pm \Delta z_\pm}
  \label{def:bulk_courant}
\end{equation}
and refer to it as {\em bulk Courant number}, considering its similarity to
classical Courant numbers.

Plugging \eqref{eq:sim_implicit_bulk_k} into \eqref{eq:A_exflux_bulk2} and
\eqref{eq:A_exflux_bulk3} would result in equations that take $\A$ as functions
of $\beta_\pm$ and $d_\pm$, but there are no closed-form solutions. Thus we
consider the asymptotic behavior when $\beta_+$ approaches zero. Equation
\eqref{eq:A_exflux_bulk2} becomes
\begin{equation}
  \A
  = \frac{1}{1- d_+ \left( \kappa_+ -1 \right)}.
\label{eq:A_exflux_bulk2_asym}
\end{equation}
Plugging \eqref{eq:sim_implicit_bulk_k} into it and simplifying lead to
\begin{equation}
\A d_+ (\A - 1) = 0.
\end{equation}
Therefore $\A = 1$, which implies that the system is stable. Similarly with
\eqref{eq:A_exflux_bulk3}, we can see the system is stable when $\beta_-$
approaches zero.

Deriving general stability constraints is difficult since $\A$ is a complicated
function of $d_+$, $d_-$, and $\beta_+$ in \eqref{eq:A_exflux_bulk2} or a
function of $d_-$, $d_+$, and $\beta_-$ in \eqref{eq:A_exflux_bulk3}.
Nevertheless, if we omit the feedback from the coupling domain, that is,
$\kappa_-$ in \eqref{eq:A_exflux_bulk2} or $\kappa_+$ in
\eqref{eq:A_exflux_bulk3}, a nice closed-form expression for stability
constraint can be found in this best case. Therefore, in the general case the
coupling is not absolutely stable. Details are given in Section
\ref{sec:forced_model}.

% It is difficult to derive a necessary stability condition considering $\A$ as
% a function of $d_+$,$d_-$ and $\beta_+$. But perhaps we can establish a
% sufficient condition without any assumption on the relationship between  $d_+$
% and $d_-$.

\subsection{Partially implicit flux coupling with implicit time stepping}\label{sec:imfluximts}
Like the explicit case, the partially implicit flux computation allows the two
components to be handled simultaneously. But the equation corresponding to the
interface node needs an implicit solve. The full algorithm is
\begin{subequations}
  \begin{align}
  & q_{0_+}^{n+1} = b (T_{\frac12}^{n+1} - T_{-\frac12}^n ) \\
  & T_{j+\frac12}^{n+1} - T_{j+\frac12}^n = \frac{\Delta t}{\rho_+ c_+ \Delta z_+}  \left( q_{j+1}^{n+1} - q_{j}^{n+1} \right), \quad j \geq 0 \\
  & q_{0_-}^{n+1} = b (T_{\frac12}^{n} - T_{-\frac12}^{n+1} ) \\
  & T_{j+\frac12}^{n+1} - T_{j+\frac12}^n = \frac{\Delta t}{\rho_- c_- \Delta z_-}  \left(  q_{j+1}^{n+1} - q_{j}^{n+1}  \right), \quad j<0 .
  \end{align}
  \label{eq:pimflux_bulk}
\end{subequations}
Inserting the normal mode solution \eqref{eq:bulk_normal_mode_solution} into
Equation \eqref{eq:pimflux_bulk}, we have
\begin{align}
\begin{split}
1 - \A^{-1}  & = d_+(\kappa_+ - 2 + \kappa_+^{-1}) \\
1 - \A^{-1} & =  d_+(\kappa_+ - 1) - \beta_+ \left( 1- \A^{-1} \kappa_-^{-1} \kappa_+^{-1}\right) \\
1 - \A^{-1} & = \beta_- \left( \A^{-1} \kappa_- \kappa_+ - 1\right) - d_-(1- \kappa_-^{-1}) \\
1 - \A^{-1}  & = d_-(\kappa_- - 2 + \kappa_-^{-1}) .\\
\end{split}
\label{eq:sim_pimflux_bulk2}
\end{align}
The second and third equations give
\begin{equation}
  \A = \frac{1 + \beta_+ \kappa_-^{-1} \kappa_+^{-1}}{\beta_+ + 1 - d_+ \left( \kappa_+ - 1\right)}
\label{eq:A_pimflux_bulk2}
\end{equation}
and
\begin{equation}
  \A = \frac{1 + \beta_- \kappa_- \kappa_+}{\beta_- + 1 - d_- \left( \kappa_-^{-1} - 1 \right)},
\label{eq:A_pimflux_bulk3}
\end{equation}
respectively. We distinguish between two situations: $| \kappa_-^{-1}
\kappa_+^{-1}| \leq 1$ or $|\kappa_- \kappa_+| \leq 1$. For the former case, we
can observe that
\[
  | 1 + \beta_+ \kappa_-^{-1} \kappa_+^{-1} | \leq 1 + \beta_+
\]
and
\[
  | \beta_+ + 1 - d_+ \left( \kappa_+ - 1\right) | > 1 + \beta_+.
\]
Thus we have $|\A| < 1$. This is also true for the latter case. Therefore, the
system is unconditionally stable.

% Explicit flux coupling with explicit time stepping
% \begin{subequations}
%   \begin{align}
%   & q_{0_+}^n = b (T_{\frac12}^n - T_{-\frac12}^n ) \label{eq:explicit_bulk1} \\
%   & T_{j+\frac12}^{n+1} - T_{j+\frac12}^n = \frac{\Delta t}{\rho_+ c_+ (\Delta z_+)^2}  \left( q_{j+1}^n - q_{j}^n \right) \quad j \geq 0 \label{eq:explicit_bulk2} \\
%   & q_{0_-} = q_{0_+} \label{eq:explicit_bulk3} \\
%   & T_{j+\frac12}^{n+1} - T_{j+\frac12}^n  = \frac{\Delta t }{\rho_- c_- (\Delta z_-)^2}  \left(  q_{j+1}^n - q_{j}^n  \right) \quad j<0 \label{eq:explicit_bulk4}
%   \end{align}
% \end{subequations}

\section{Analysis for one-way coupled model}\label{sec:forced_model}
In this section, we present a stability analysis of a one-way coupled diffusion
model, which is forced by the other component in the coupled system but does not
provide feedback to the forcing component \cite{Beljaars2017}. In other words,
one of the domains treats the variables from the other domain as a boundary
condition. The advantage of using this simpler system is to enable explicit
derivation of the stability criterion, and therefore give insight into the
coupled stability. Again, we analyze the changes to stability induced by
different treatments of the interfacial flux.
\subsection{Stability analysis for the diffusion equation}

Let us consider the forced diffusion equation solely on the negative domain, as
is expressed by
\iflongversion
\begin{subequations}
\begin{align}
\frac{\partial}{\partial t} T_- = \frac{\partial}{\partial z} (K_- \frac{\partial}{\partial z} T_-), \quad z < 0 \\
K_- \frac{\partial}{\partial z} T_- = b (T_+ - T_-), \quad z =0 .
\label{eq:1ddiff_sn}
\end{align}
\end{subequations}
\else
\eqref{eq:1dadvdiff2}.
\fi
Explicit coupling boundary yields
\iflongversion
\begin{subequations}
\begin{align}
  \label{eq:1ddiff_sn_exflux_a}
  T_{j+\frac12}^{n+1} - T_{j+\frac12}^n = \frac{\Delta t}{\rho_- c _- \Delta z_-}  \left(  q_{j+1}^{n+1} - q_{j}^{n+1}  \right), \quad j< 0 \\
  \label{eq:1ddiff_sn_exflux_b}
  q_{j}^{n+1} = \nu_- \frac{T_{j+\frac12}^{n+1} - T_{j-\frac12}^{n+1} }{\Delta z_-}, \quad q_{0}^{n+1} = b (T_{\frac12}^{n} - T_{-\frac12}^{n}) .
\end{align}
\label{eq:1ddiff_sn_exflux}
\end{subequations}
\else
\eqref{eq:implicit_bulk4}.
\fi
In the simplest case of ocean alone, for stability analysis, we can have a fixed
boundary that is set to $T_{\frac12}^{n}=0$ so that
\begin{equation}
q_{0}^{n+1} = - b T_{-\frac12}^{n}.
\end{equation}
Then the method for solving the diffusion equation can be written as
\begin{subequations}
\begin{align}
  T_{j+\frac12}^{n+1} - T_{j+\frac12}^n = d_-  \left( T_{j+\frac32}^{n+1} - 2 T_{j+\frac12}^{n+1} + T_{j-\frac12}^{n+1}   \right), \quad j< -1 \\
  T_{-\frac12}^{n+1} - T_{-\frac12}^n = -\beta_- T_{-\frac12}^{n} - d_- \left(T_{-\frac12}^{n+1} - T_{j-\frac32}^{n+1} \right), \quad j=-1 .
\end{align}
\label{eq:1ddiff_sn_dis}
\end{subequations}

If there is no boundary, we can consider the solution
\begin{equation}
T_{j+\frac12}^{n+1} = \A^{n+1} e^{i(j m \Delta z_-)}\,,
\label{eq:sol_nobound}
\end{equation}
so that
\begin{align*}
& \A^{n+1}e^{i m j \Delta z_-}  (1-\A^{-1} )= d_- \A^{n+1} e^{i m j \Delta z_-}  (e^{i m \Delta z_-} -2 + e^{-i m\Delta z_-}), \quad j< -1 \\
\Rightarrow & \quad 1 - \A^{-1}= d_- (e^{i m \Delta z_-} -2 +e^{-i m \Delta z_-} ) = 2 d_- ( cos(m\Delta z_-) - 1) \\
\Rightarrow & \quad A^{-1}=1+2 d_- (1-cos(m \Delta z_-)) > 1 \,,
\end{align*}
which results in an unconditionally stable scheme that is in agreement with the
classical theory. 

With the boundary condition, we establish the following theorem for the
stability condition.
\begin{theorem}
Method \eqref{eq:1ddiff_sn_dis} applied to the diffusion model
\eqref{eq:1dadvdiff2} yields a stable solution if and only if $\beta_- \leq 1 +
\sqrt{1+2d_-}$.
\end{theorem}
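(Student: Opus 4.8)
The plan is to apply the normal mode method together with the GKS criterion of Proposition~\ref{prop:GKS}: I will insert the ansatz $T_{j+\frac12}^n=\A^n\kappa_-^j$ ($j\le-1$) into \eqref{eq:1ddiff_sn_dis} and ask for which $\beta_-,d_-$ a nontrivial bounded eigensolution with $|\A|\ge 1$ exists. Substituting into the interior equation (the case $j<-1$) reproduces exactly the characteristic relation $1-\A^{-1}=d_-(\kappa_--2+\kappa_-^{-1})$ already analyzed in Section~\ref{sec:bulk}, so the admissible decaying branch is the root with $|\kappa_-|>1$, namely $\kappa_-^{-1}=1+s_--\sqrt{s_-^2+2s_-}$ with $s_-=(1-\A^{-1})/(2d_-)$. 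Substituting the ansatz into the boundary equation ($j=-1$) gives the second relation $1-\A^{-1}=-\beta_-\A^{-1}-d_-(1-\kappa_-^{-1})$.

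Next I would eliminate $\kappa_-$. Writing $w=\A^{-1}$, the two relations yield $\kappa_-^{-1}=1+\bigl(1-(1-\beta_-)w\bigr)/d_-$ and $\kappa_-=1-\beta_-w/d_-$; imposing reciprocity $\kappa_-\kappa_-^{-1}=1$ collapses the system to a single characteristic equation
\begin{equation}
P(w):=\beta_-(1-\beta_-)\,w^2-(d_-+\beta_-)\,w+d_-=0 .
\end{equation}
A useful first observation is that its discriminant equals $(d_--\beta_-)^2+4\beta_-^2 d_-\ge 0$, so both roots $w=\A^{-1}$ are always real; hence an eigenvalue can leave the closed unit disk only through $\A=\pm1$, and no complex pair can cross $|\A|=1$. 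The value $\A=1$ ($w=1$) gives $P(1)=-\beta_-^2$, which vanishes only in the trivial case $\beta_-=0$, so the sole relevant crossing is at $\A=-1$. Evaluating there, $P(-1)=-(\beta_-^2-2\beta_--2d_-)$, which is zero precisely when $\beta_-=1+\sqrt{1+2d_-}$ (the other root $1-\sqrt{1+2d_-}$ being negative and discarded); this already identifies the claimed threshold.

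To turn this into an ``if and only if'' I must select the genuine root. A root of $P$ is a legitimate (decaying) eigensolution only when the associated $\kappa_-=1-\beta_-w/d_-$ satisfies $|\kappa_-|>1$; for a real root $w<0$ this holds automatically since then $\kappa_-=1+\beta_-|w|/d_->1$, whereas the competing positive root fails the branch-sign condition inherited from the square root and is spurious. I would then read off the location of the genuine (negative) root from the sign of $P(-1)$: in the relevant regime $\beta_->1$ the leading coefficient is negative and $P(0)=d_->0$, so $P$ has one negative and one positive zero; then $P(-1)>0$ forces the negative root to lie left of $-1$ (so $|\A|<1$, stable), while $P(-1)<0$ places it in $(-1,0)$ (so $|\A|>1$, unstable). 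The factorization of $P(-1)$ above gives exactly $\beta_-\le 1+\sqrt{1+2d_-}$ as the stability condition, with equality corresponding to a simple eigenvalue at $\A=-1$, which is bounded and hence stable in the norm of the paper's stability definition. The low-forcing regimes ($\beta_-\le 1$, where the leading coefficient changes sign or $P$ degenerates to a linear equation) must be checked separately, but there the genuine root is either absent or yields $\A\in(0,1)$, so stability holds throughout.

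The main obstacle is precisely this root selection: $P$ inevitably carries a spurious root that can have $|\A|>1$ even deep inside the stable regime (for instance at $\beta_-=d_-=1$ one finds $\A=2$ with $|\kappa_-|<1$), so a naive ``all roots of $P$ inside the unit disk'' reading would give the wrong answer. The argument therefore hinges on proving that exactly one root is admissible and tracking only that root. By contrast the continuous spectrum requires no work, since the no-boundary computation preceding the statement already shows those modes satisfy $\A^{-1}=1+2d_-(1-\cos(m\Delta z_-))\ge 1$ and hence remain in the closed unit disk independently of $\beta_-$.
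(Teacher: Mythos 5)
Your proposal is correct and follows essentially the same route as the paper: the same normal-mode ansatz yields the same pair of relations \eqref{eq:1ddiff_sn_sim1}--\eqref{eq:1ddiff_sn_sim2}, and your characteristic polynomial $P(w)$ in $w=\A^{-1}$ is exactly the paper's quadratic $d_-\A^2-(\beta_-+d_-)\A-\beta_-(\beta_--1)=0$ divided by $\A^2$, with the same threshold $\beta_-=1+\sqrt{1+2d_-}$ emerging (the paper solves $|\A|\le 1$ directly for the negative-sign root and splits into the cases $\beta_-\ge 1$ and $\beta_-<1$, while you read the crossing off from the sign of $P(-1)$). The only substantive difference is that you are more explicit about discarding the spurious root via the admissibility condition $|\kappa_-|>1$ --- the paper instead argues by contradiction that the positive-sign root cannot yield a stable solution --- though your justification that the positive root always violates $|\kappa_-|>1$ is asserted rather than proved, just as it is left implicit in the paper.
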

\begin{proof}
We can no longer assume a solution of form \eqref{eq:sol_nobound}. Instead, we
assume the eigen-solution of the form
\begin{equation}
T_{j+\frac12}^{n+1} = \A^{n+1} \kappa_-^j .
\label{eq:sol_bound}
\end{equation}
A stable solution that satisfies the far-field boundary condition (at
$j\rightarrow -\infty$, $\kappa_-^j \rightarrow 0$) is $|\kappa_-|>1$ and $|\A|
\leq 1$.
From \eqref{eq:1ddiff_sn_dis} we have
\begin{subequations}
\begin{align}
1 - \A^{-1}  & = d_-(\kappa_- - 2 + \kappa_-^{-1}) \label{eq:1ddiff_sn_sim1}\\
1 - \A^{-1} & = -\beta_- \A^{-1} - d_-(1- \kappa_-^{-1})\,, \label{eq:1ddiff_sn_sim2}
\end{align}
\end{subequations}
which gives
% Eqn. \eqref{eq:1ddiff_sn_sim1} states that, there is no solution of real $\A$ and $\kappa_-$ that makes the system stable \textcolor{red}{This is not true}.
%
%More general, from \eqref{eq:1ddiff_sn_sim2} we have
\begin{equation}
    \kappa_-^{-1} = \frac{d_- + 1 + (\beta_- -1)\A^{-1}} {d_-} .
    \label{eq:1ddiff_sn_kappa}
\end{equation}

Substituting into  \eqref{eq:1ddiff_sn_sim1}, we have an equation for $\A$,
\begin{equation}
d_- \A^2-(\beta_- + d_-)\A- \beta_-(\beta_- -1)=0 ,
\end{equation}
with roots
\begin{equation}
\A = \frac{(\beta_-+d_-) \pm \sqrt{(\beta_- + d_-)^2 + 4 \beta_-(\beta_- -1) d_-}}{2 d_-}\,,
\label{eq:1ddiff_sn_aroots}
\end{equation}
which are both real because under the radical is a positive real number.

Now we show that choosing the positive sign does not lead to a stable solution.
First, we can observe from \eqref{eq:1ddiff_sn_kappa} that $\kappa_-$ is also
real. When choosing the positive sign, we have $0< \A \leq 1$, and thus
$1-\A^{-1} \leq 0$. According to \eqref{eq:1ddiff_sn_kappa}, the right-hand side
must be no larger than zero, which means $k_- \leq 0$. Equation
\eqref{eq:1ddiff_sn_kappa} indicates that $k_-$ can  be negative or zero only
when $\beta_- \leq 1$.

With $\beta_- \leq 1$ and \eqref{eq:1ddiff_sn_aroots}, we obtain
\begin{align}
  \nonumber
\A& = \frac{(\beta_-+d_-) + \sqrt{(\beta_- + d_-)^2 + 4
    \beta_-(\beta_- -1) d_-}}{2 d_-}\\
\nonumber
&\geq \frac{(\beta_-+d_-) + \sqrt{(\beta_- + d_-)^2}}{2 d_-} > 1+ \frac{\beta_-}{d_-} > 1,
\end{align}
which contradicts the assumption $|\A| \leq 1$.

Selecting the root with negative sign in \eqref{eq:1ddiff_sn_aroots}, we
consider the following two cases:
\begin{enumerate}
  \item If $\beta \geq 1$, the condition $|\A|\leq 1 $ implies
  \begin{equation}
      \beta_-^2 - 2\beta_- -2 d_- \leq 0.
  \end{equation}
  Solving the quadratic inequality gives
  \begin{equation}
      \beta_- \leq 1 + \sqrt{1+2d_-}\,.
  \end{equation}
  \item If $\beta < 1$, we have a stable solution because
  \iflongversion
  \begin{align*}
  \A &= \frac{(\beta_-+d_-) - \sqrt{(\beta_- + d_-)^2 + 4 \beta_-(\beta_- -1) d_-}}{2 d_-} \\
  &< \frac{(\beta_-+d_-) - \sqrt{(\beta_- - d_-)^2}}{2 d_-}
  \leq \frac{(\beta_-+d_-) - |\beta_- - d_-|}{2 d_-} \\
  &\leq \frac{(\beta_-+d_-) - \beta_- + d_-}{2 d_-} = 1\,.
  \end{align*}
  \else
  \begin{align*}
    \A &= \frac{(\beta_-+d_-) - \sqrt{(\beta_- + d_-)^2 + 4 \beta_-(\beta_- -1) d_-}}{2 d_-} \\
    &< \frac{(\beta_-+d_-) - \sqrt{(\beta_- - d_-)^2}}{2 d_-}
    \leq \frac{(\beta_-+d_-) - \beta_- + d_-}{2 d_-} = 1\,.
    \end{align*}
  \fi
\end{enumerate}
Combining these two cases, we can see that $\beta_- \leq 1 + \sqrt{1+2d_-}$ is
the final condition for stability.
\end{proof}
% Beljaars et.al. proposed in \cite{Beljaars2017} to approximate the interface
% flux based on a linear relation fitted empirically instead of solving the
% coupled implicit system. \textcolor{blue}{The empirical model needs to be
% carefully tuned according to the system settings. Parameters that work for one
% scenario do not necessarily work for other scenarios.}

In Figure \ref{fig:exflux_stab} we illustrate the stability regions resulting
from our analysis. The norm of $A$ and $\kappa^{-1}$ are calculated for a range
of values of $\beta_-$ and $d_-$ according to the expressions
\eqref{eq:1ddiff_sn_aroots} and \eqref{eq:1ddiff_sn_kappa}. The boundary of the
stability regions agrees exactly with the relationship established by the
analysis.

Based on the same model problem, Beljaars et al. derived an empirical stability
boundary of $\beta_- \leq 2+ \sqrt{d_-}^{1.1}$ in \cite{Beljaars2017}, which is
close to our analytical result. So, if there is no mixing, the explicit scheme
is stable for weak coupling but unstable for strong coupling.

Even a decrease of stepsize $\Delta t$ for the negative domain is not effective
in suppressing coupling, because it reduces both $d_-$ and $\beta_-$. Instead,
the most effective way to suppress coupling instability is to reduce $\Delta
z_-$, which increases $d_-$ much more rapidly than $\beta_-$.
% (Strange: For the weak coupling limit
% $\beta_- < ~0.5$ and strong diffusion $d_-$, however, there is no coupling
% solution because $|\kappa_-|>1$.)
%
\begin{figure}
  \centering
  \includegraphics[width=0.8\linewidth]{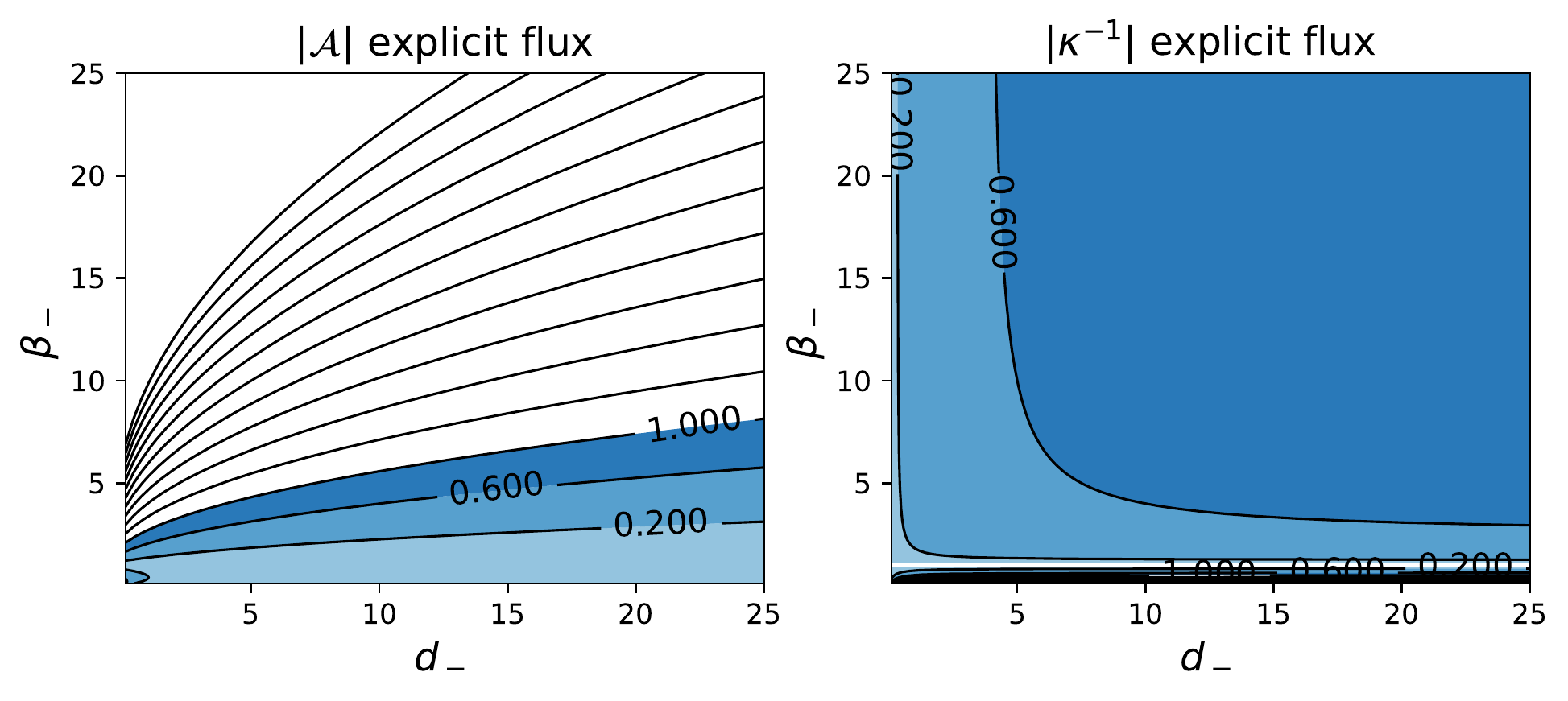}
  \caption{Stability regions for explicit flux coupling (based on analytical formula).}
  \label{fig:exflux_stab}
\end{figure}

\subsection{Implicit interfacial flux}
Next we consider the implicit treatment of the interfacial flux. The equation
for the boundary node $j=-1$ changes to
\begin{equation}
  T_{-\frac12}^{n+1} - T_{-\frac12}^n = -\beta_- T_{-\frac12}^{n+1} - d_- \left(T_{-\frac12}^{n+1} - T_{j-\frac32}^{n+1} \right), \quad j=-1.
  \label{eq:1ddiff_sn_dis2}
\end{equation}
\iflongversion
Plugging \eqref{eq:sol_bound} into \eqref{eq:1ddiff_sn_dis2}, we have
\begin{equation}
1 - \A^{-1} = -\beta_-  - d_-(1- \kappa_-^{-1}).
\label{eq:1ddiff_sn_sim3}
\end{equation}
Then we can solve \eqref{eq:1ddiff_sn_sim3} and \eqref{eq:1ddiff_sn_sim1} for
$\kappa_-^{-1}$.
\fi

After some manipulation, we obtain
\begin{equation}
\A = \frac{\beta_--d_-}{\beta_- - d_-+\beta^2}\,,\quad \kappa_-^{-1} = \frac{d_-}{d_--\beta_-} .
\label{eq:1ddiff_sn_imflux_A_kappa}
\end{equation}

\begin{theorem}
  The method \eqref{eq:1ddiff_sn_dis} with the equation for the interface node
  replaced by \eqref{eq:1ddiff_sn_dis2} gives a stable solution of the one-way
  coupled diffusion model \eqref{eq:1dadvdiff2}.
\end{theorem}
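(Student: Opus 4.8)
The plan is to read the claim through the GKS criterion of Proposition~\ref{prop:GKS}: the method \eqref{eq:1ddiff_sn_dis}--\eqref{eq:1ddiff_sn_dis2} is stable exactly when no nontrivial eigensolution of the form \eqref{eq:sol_bound} exists with $|\A|\ge 1$. The word \emph{nontrivial} carries the far-field requirement $|\kappa_-|>1$, so that $\kappa_-^{\,j}\to 0$ as $j\to-\infty$ and the mode is genuinely bounded on the semi-infinite ocean domain. The hard algebra is already behind us: imposing the interior relation \eqref{eq:1ddiff_sn_sim1} for $j<-1$ together with the interface-node relation \eqref{eq:1ddiff_sn_sim3} and subtracting eliminates the $\kappa_-^{-1}$ term, pinning down a single real pair $(\A,\kappa_-)$, namely the closed forms recorded in \eqref{eq:1ddiff_sn_imflux_A_kappa}. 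Hence there is only one candidate eigenpair to test, and the whole proof collapses to checking the two scalars $\kappa_-$ and $\A$ against the conditions $|\kappa_-|>1$ and $|\A|\ge 1$, remembering that $\beta_-,d_->0$.

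First I would rewrite the spatial root as $\kappa_-=1-\beta_-/d_-$, which is strictly less than $1$ for all positive parameters, so the far-field condition $|\kappa_-|>1$ can hold only through $\kappa_-<-1$, i.e.\ only when $\beta_->2d_-$. This single observation splits the argument into two regimes. When $\beta_-\le 2d_-$ the candidate mode fails $|\kappa_-|>1$, so it is not an admissible bounded eigensolution at all and cannot contribute to the GKS count, whatever the magnitude of $\A$. When $\beta_->2d_-$ the mode is admissible, and I would then bound $\A$ directly: writing $\A=(\beta_--d_-)/(\beta_--d_-+\beta_-^2)$ from \eqref{eq:1ddiff_sn_imflux_A_kappa}, the hypothesis $\beta_->2d_->d_-$ makes the numerator positive while the denominator exceeds it by the strictly positive amount $\beta_-^2$, so $0<\A<1$ and $|\A|<1$. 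Combining both regimes shows that no admissible eigensolution ever reaches $|\A|\ge 1$, and Proposition~\ref{prop:GKS} then gives stability for all positive $\beta_-$ and $d_-$, i.e.\ unconditionally, which is the content of the theorem.

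The step I expect to be the crux is recognizing that it is the boundedness constraint, not the size of $\A$ in isolation, that governs stability here. Taken at face value the formula for $\A$ looks dangerous: in the range $\beta_-<d_-$ with $\beta_-^2<d_--\beta_-$ one computes $\A=(d_--\beta_-)/(d_--\beta_--\beta_-^2)>1$, which would naively signal instability. The resolution is that this apparent unstable root carries $\kappa_-\in(0,1)$, so its spatial profile \emph{grows} as $j\to-\infty$ and is therefore not a legitimate eigenfunction of the semi-infinite problem; the boundary relation is compatible only with the non-decaying branch there. I would make this point explicit, and also dispose of the two degenerate parameter values where the closed forms are singular --- namely $\beta_-=d_-$, where $\kappa_-=0$, and the zero $\beta_-=(-1+\sqrt{1+4d_-})/2$ of the denominator of $\A$ --- by verifying that both lie strictly inside the region $\beta_-<2d_-$ in which no admissible mode exists, so neither affects the conclusion.
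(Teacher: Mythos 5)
Your proposal is correct and follows essentially the same route as the paper: both rely on the closed forms \eqref{eq:1ddiff_sn_imflux_A_kappa}, restrict attention to modes satisfying the far-field admissibility condition on $\kappa_-$ (equivalently $\beta_-\ge 2d_-$), and show that every such mode has $|\A|\le 1$. Your only departure is cosmetic --- you bound $\A$ directly by comparing numerator and denominator in the admissible regime instead of deriving the explicit threshold $\beta_-\ge\sqrt{1+2d_-}-1$ and comparing it with $2d_-$ as the paper does --- and your extra remarks on the inadmissible $\A>1$ branch and the singular parameter values are consistent with, though not required by, the paper's argument.
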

\begin{proof}

\iflongversion
\begin{enumerate}
  \item If $\beta_- \geq d_-$, $|\A|\leq 1$ always holds.
  \item If $\beta_- < d_-$, $|\A|\leq 1$ leads to
  \begin{equation}
    \beta_-^2+2 \beta_- - 2 d_- \geq 0
  \end{equation}
  which reduces to
  \begin{equation}
    \beta_- \geq \sqrt{1+2d_-} -1
  \end{equation}
\end{enumerate}
Combining these two cases, we have
\begin{equation}
    \beta_- \geq \sqrt{1+2d_-} -1
\end{equation}
\else
Solving $|\A| \leq 1$ under two complementary scenarios $\beta_- < d_-$ and $\beta_- \geq d_-$ and combining the results, we have
\begin{equation}
    \beta_- \geq \sqrt{1+2d_-} -1.
\end{equation}
\fi
For $|\kappa^{-1}|\leq 1$, we have
\begin{equation}
    \beta_- \geq 2d_-.
\end{equation}
Since $2d_- > \sqrt{1+2d_-} -1$, $|\A| \leq 1$ always holds for all the eigen
modes with $|\kappa^{-1}|\leq 1$, thus stability follows.
\end{proof}

Figure \ref{fig:imflux_stab_u0} graphically depicts the stability conditions for
the implicit case that are calculated directly using
\eqref{eq:1ddiff_sn_imflux_A_kappa}. It clearly shows that the region where
$|\kappa^{-1}|\leq 1$ is a subset of region where $|\A| \leq 1$.
\begin{figure}
  \centering
  \includegraphics[width=0.8\linewidth]{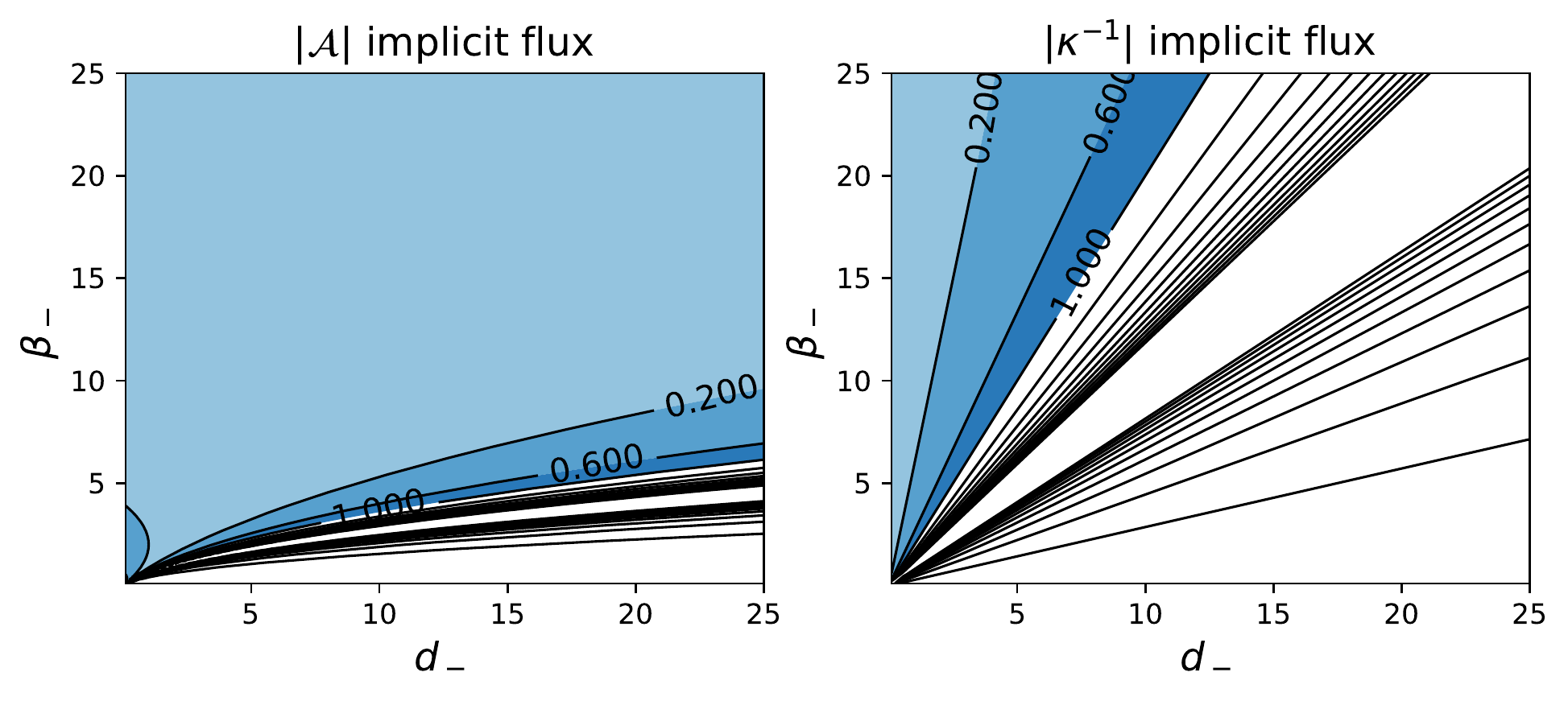}
  \caption{Stability regions for implicit flux coupling (based on analytical formula).}
  \label{fig:imflux_stab_u0}
\end{figure}

\section{Numerical validation via matrix stability analysis}\label{sec:validation}
While the normal mode analysis provides insights into the stability of the
coupled algorithms, we use a matrix stability analysis as a numerical tool to
validate the results.
% partitioned methods are more preferable in practice to solve the coupled model \eqref{eq:1dadvdiff2} because of ease of implementation and efficiency benefits
To incorporate the influence of flux coupling in a simple way, we cast the
partitioned algorithms in a monolithic framework that consists of each component
and provides a unified view for different coupling approaches.

% The partitioned approach prescribed by \eqref{eq:css} is formally equivalent to the following monolithic approach as given in \cite{Giles1997}:
% \begin{subequations}
%   \begin{align}
%   & \rho_+ c_+ \frac{\partial}{\partial t} T_j = \frac{\nu_+}{(\Delta z_+)^2}  \left( T_{j+1} -2 T_j + T_{j-1}\right), \quad j>0 \label{eq:fd_mon1} \\
%   & \frac12 \left( \rho_- c_- \Delta z_- + \rho_+ c_+ \Delta z_+ \right) \frac{\partial}{\partial t} T_0 = q_{\frac12} - q_{-\frac12} ,\quad j=0  \label{eq:fd_mon2} \\
%   & \nonumber \text{ with } q_{\frac12} = \nu_+ \frac{T_1 -T_0}{\Delta z_+} \quad  q_{-\frac12} = \nu_- \frac{T_0 -T_{-1}}{\Delta z_-}  \\
%   & \rho_- c_- \frac{\partial}{\partial t} T_j = \frac{\nu_-}{(\Delta z_-)^2}  \left( T_{j+1} -2 T_j + T_{j-1}\right), \quad j<0 \label{eq:fd_mon3}
%   \end{align}
% \end{subequations}

Depending on the flux coupling strategies, the interfacial fluxes $q_{\frac12}$
and $q_{-\frac12}$ can be computed in different ways. However, we can rewrite
the fully discretized equation for \eqref{eq:css} in a general matrix form
\begin{equation}
 \mathbf{A} \mathbf{T}^{n+1} = \mathbf{B} \mathbf{T}^{n}.
 \label{eq:matrix_form}
\end{equation}
Here $\mathbf{T}$ is represented as
\begin{equation}
  \quad \mathbf{T} = [ T_{N_-},\dots,T_{-2},T_{-1},T_0,T_{1},T_{2},\dots,T_{N_+}]^T
\end{equation}
for the Dirichlet-Neumann condition and
\begin{equation}
\quad \mathbf{T} = [ T_{N_-},\dots,T_{-2},T_{-1},T_{1},T_{2},\dots,T_{N_+}]^T
\end{equation}
for bulk condition due to the differences in the grid settings. In the
following, we present the results for the bulk interface condition and then the
Dirichlet-Neumann condition. Both the two-way coupled and the one-way coupled models are included 

\subsection{Bulk condition}
For the backward Euler scheme, the corresponding $\mathbf{A}$ and $\mathbf{B}$
are
\begin{subequations}
  \footnotesize
  \begin{align}
    &\mathbf{A} = \\
    &\nonumber\left[
    \resizebox{0.87\textwidth}{!}{$\displaystyle
    \begin{array}{c c c c c : c c c c c}
    2 d_-+1 & -d_-    &         &         &         &         &         &         &         &         \\
    -d_-    & 2 d_-+1 & -d_-    &         &         &         &         &         &         &         \\
            & \ddots  & \ddots  & \ddots  &         &         &         &         &         &         \\
            &         &    -d_- & 2 d_-+1 & -d_-    &         &         &         &         &         \\
            &         &         & -d_-    & d_-+\theta \beta_- +1& -\gamma \beta_- &         &         &         &       \\
             \hdashline
            &         &         &         & -\gamma \beta_+  & d_+ + \theta \beta_+ + 1& -d_+    &         &         &         \\
            &         &         &         &         & -d_+    & 2 d_++1 & -d_+    &         &         \\
            &         &         &         &         &         & \ddots  & \ddots  & \ddots  &         \\
            &         &         &         &         &         &         &    -d_+ & 2 d_++1 & -d_+    \\
            &         &         &         &         &         &         &         & -d_+    & 2 d_++1
    \end{array}
    $}
    \right] \\
    &\mathbf{B} =
        \left[
    \resizebox{0.6\textwidth}{!}{$\displaystyle
    \begin{array}{c c c c c : c c c c c}
    1 &   &        &   &                       &                       &   &        &   &     \\
      & 1 &        &   &                       &                       &   &        &   &     \\
      &   & \ddots &   &                       &                       &   &        &   &     \\
      &   &        & 1 &                       &                       &   &        &   &     \\
      &   &        &   & 1-(1-\theta)\beta_- & (1-\gamma)\beta_-   &   &        &   &     \\
           \hdashline
      &   &        &   & (1-\gamma)\beta_+    & 1-(1-\theta)\beta_+ &   &        &   &     \\
      &   &        &   &                       &                       & 1 &        &   &     \\
      &   &        &   &                       &                       &   & \ddots &   &     \\
      &   &        &   &                       &                       &   &        & 1 &     \\
      &   &        &   &                       &                       &   &        &   & 1
    \end{array}
    $}
    \right] ,
  \end{align}
  \label{eq:matrix_form_bulk}
\end{subequations}
where $\theta \in \{0,1\}$ and $\gamma \in \{0,1\}$ defines different coupling methods:
\begin{enumerate}
    \item Explicit flux coupling $(\theta,\gamma) = (0,0) $.
    \item Partially implicit flux coupling $(\theta,\gamma) = (1,0) $.
    \item Implicit flux coupling  $(\theta,\gamma) = (1,1) $.
\end{enumerate}
Since we focus on practical partitioned algorithms, we
consider the sequentially implicit formulation rather than the fully implicit
formulation. In this formulation, the upper right part of $\mathbf{A} $ will be
zero, the upper right part of $\mathbf{B} $ will be $\gamma_-$, and the rest will be
unchanged.

The one-way coupled model in Section \ref{sec:forced_model} can also be
represented in this form with $\mathbf{A}$ and $\mathbf{B}$ being only the upper
left blocks of the original matrices. $\theta=0$ and $\theta=1$ correspond to
the explicit case and partially implicit case, respectively.

The numerical scheme is considered  stable if all the eigenvalues of the
matrix $\mathbf{M} = \mathbf{A^{-1}}\mathbf{B}$ are within the unit circle.

\iflongversion
In the numerical simulations, we choose the parameter values that are commonly
used in climate models such as E3SM. A list of parameter values is given in
Table \ref{tab:params} for reference. Varying the variable $\beta$ and $d$ is
achieved by adjusting spatial and temporal resolutions and changing the depth in
$z$ direction. The same number of grid points is used for the ease of
computation.
\begin{table}
  \centering
\resizebox{0.9\textwidth}{!}{
\renewcommand{\arraystretch}{1.2}
\begin{tabular}{c | c c c c}
\hline
                            & Parameter    & Description           & Value         & Unit \\
\hline
\multirow{5}{*}{Atmosphere} & $\rho_+ $    & density               & $1$           & \si{kg\,m^{-3}}  \\
                            & $c_+$        & heat capacity         & $1000$        & \si{J\,kg^{-1}\,K^{-1}} \\
                            & $\nu_+$      & dynamic diffusivity & $[300,?]$           & \si{J\,s^{-1}\,m^{-1}\,K^{-1}}\\
                            & $K_+$      & eddy diffusivity& $[0.3,?]$           & \si{m^2\,s^{-1}}\\
%                            & $\Delta t$ & time step             & $\approx 10^{4}$           & \si{s}   \\
                            & $n_+$ & grid points  &  $10$        &   \\
%                            & $\Delta z_+$ & vertical resolution   &  $100$        & \si{m}  \\
                            & $d_+$        & parabolic CFL         & $0.5$              &   \\
                            \hline
\multirow{5}{*}{Ocean}      & $\rho_- $    &  density              & $1000$        & \si{kg\,m^{-3}}  \\
                            & $c_-$        & heat capacity         & $4000$        & \si{J\,kg^{-1}\,K^{-1}} \\
                            & $\nu_-$      & diffusion coefficient & $[4\times10^{5},?]$           & \si{m^2\,s^{-1}}\\
%                            & $\Delta t$ & time step             & $\approx 10^{5}$& \si{s}   \\
%                           & $\Delta z_-$ & vertical resolution   & $50$           & \si{m}  \\
                            & $n_-$ & grid points  &  $20$        &   \\

                            & $d_-$        & parabolic CFL         & $100$              &   \\
                            \hline
\multirow{1}{*}{}           & $b $      &  bulk coefficient        & $[5,100]$     & \si{J\,s^{-1}\,m^{-2}\,K^{-1}}  \\
\hline
\end{tabular}
} \caption{Typical parameters in practice. The smallest diffusivity coefficients
are of particular interest. This is because instability is more likely to appear
in these regime.
% The eddy diffusivity coefficient is also called the mixing coefficient, varying in different layers of atmosphere.
}
\label{tab:params}
\end{table}
\fi
% MPAS-ocean uses 50m in each of the 20 vertical levels according to its user
% guide.
% CAM5: user guide page 196 " The standard vertical resolution is
% variable; it is 3.5 km above about 65 km, 1.75 km around 4173 the stratopause
% (50 km), 1.1-1.4 km in the lower stratosphere (below 30 km), and 1.1 km in
% 4174 the troposphere (except near the ground where much higher vertical
% resolution is used in the 4175 planetary boundary layer)."
% CAM5 30 layers, same top --- circa 2009
% the height of boundary layer is typically 300m
%
% http://www.cesm.ucar.edu/events/wg-meetings/2012/presentations/amwg/rasch2.pdf
%
% p147
% https://link.springer.com/content/pdf/10.1007%2F978-1-4020-8276-4.pdf
\subsubsection{Two-way coupled model}

Since the stability of the coupled model is determined by the combination of the
 dynamics on both domains, first we look at the stability regions by taking
 $\lambda_{max} = \max(|\mathrm{eig}(\mathbf{M})|)$ as a function over the
 Courant numbers $\beta_-$ and $d_-$ for the negative domain while fixing
 $\beta_+$ and $d_+$. Figure \ref{fig:cpl_stab} shows that using explicit flux
 coupling does not lead to absolute stability, a result that agrees with the
 finding in Section \ref{sec:exfluximts} for the coupled system. And as
 expected, both partially implicit flux coupling and implicit flux coupling can
 make the coupling unconditionally stable.

To investigate contributions of the positive domain to stability, we vary
$\beta_+$ and $d_+$ by changing the grid size. As shown in Figure
\ref{fig:exflux_cpl_stab}, the stability regions for the explicit flux coupling
shrink as $\beta_+$ and $d_+$ increase.
%
% Note that we do not take into account the stepsize adaptability, e.g. $\Delta h_+ < \Delta h_-$.
\begin{figure}
  \centering
  \includegraphics[width=0.3\linewidth]{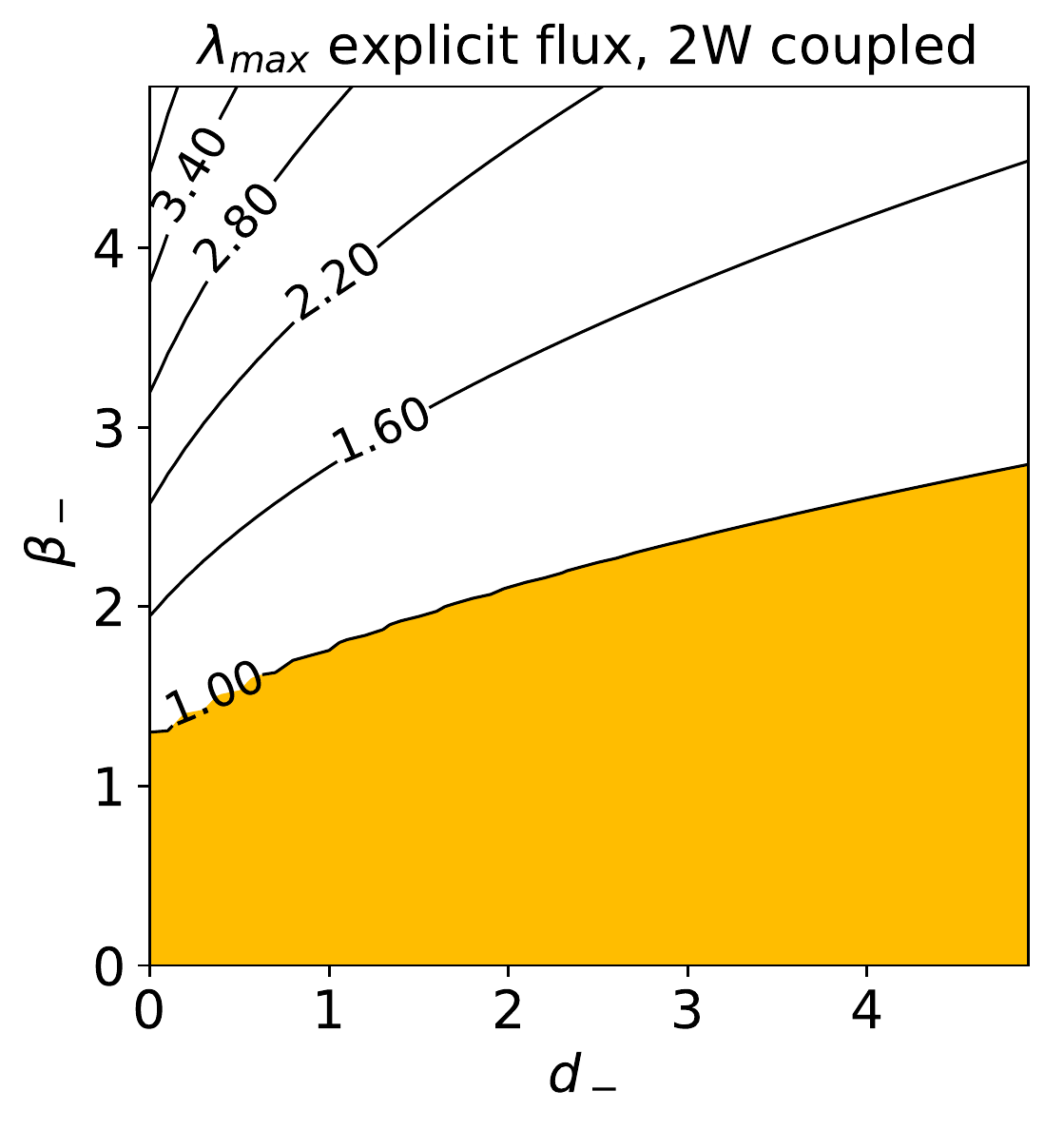}
  \includegraphics[width=0.33\linewidth]{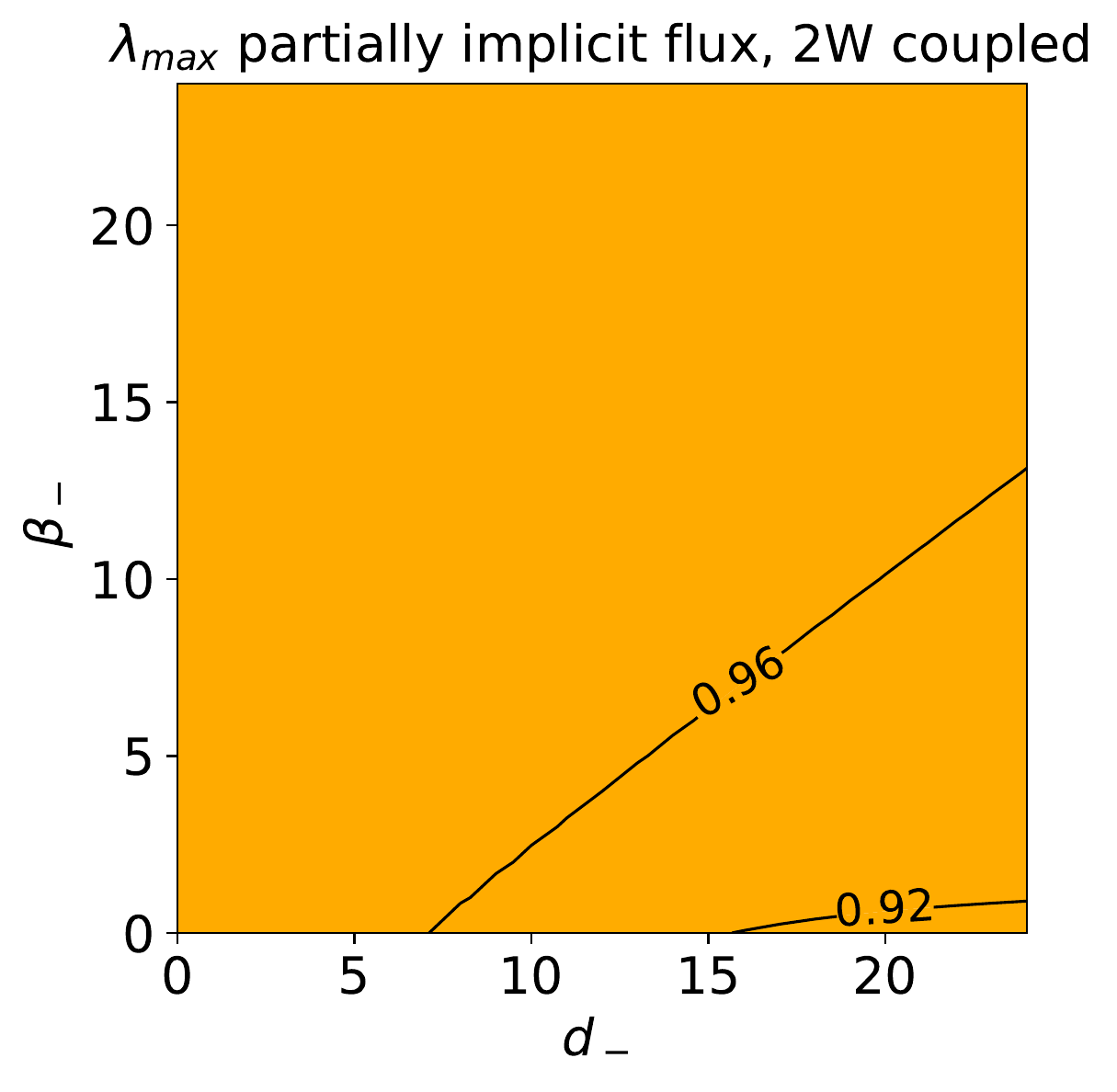}
  \includegraphics[width=0.308\linewidth]{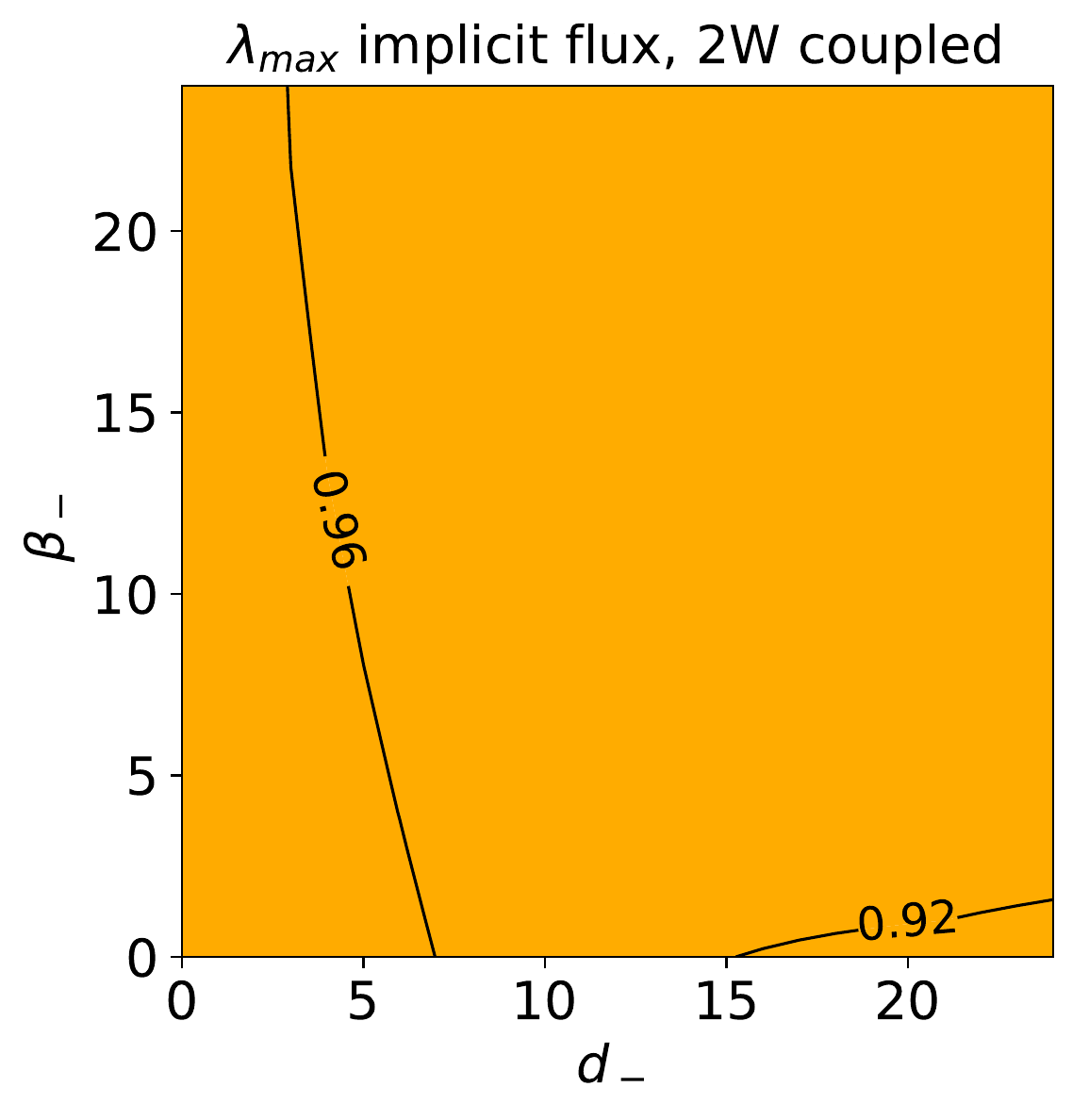}
  \caption{Stability regions for various flux coupling strategies with fixed
  parameters $\beta_+=1.125$ and $d_+=2.025$ for the coupling component.}
  \label{fig:cpl_stab}
\end{figure}
\begin{figure}
  \centering
  \includegraphics[width=0.32\linewidth]{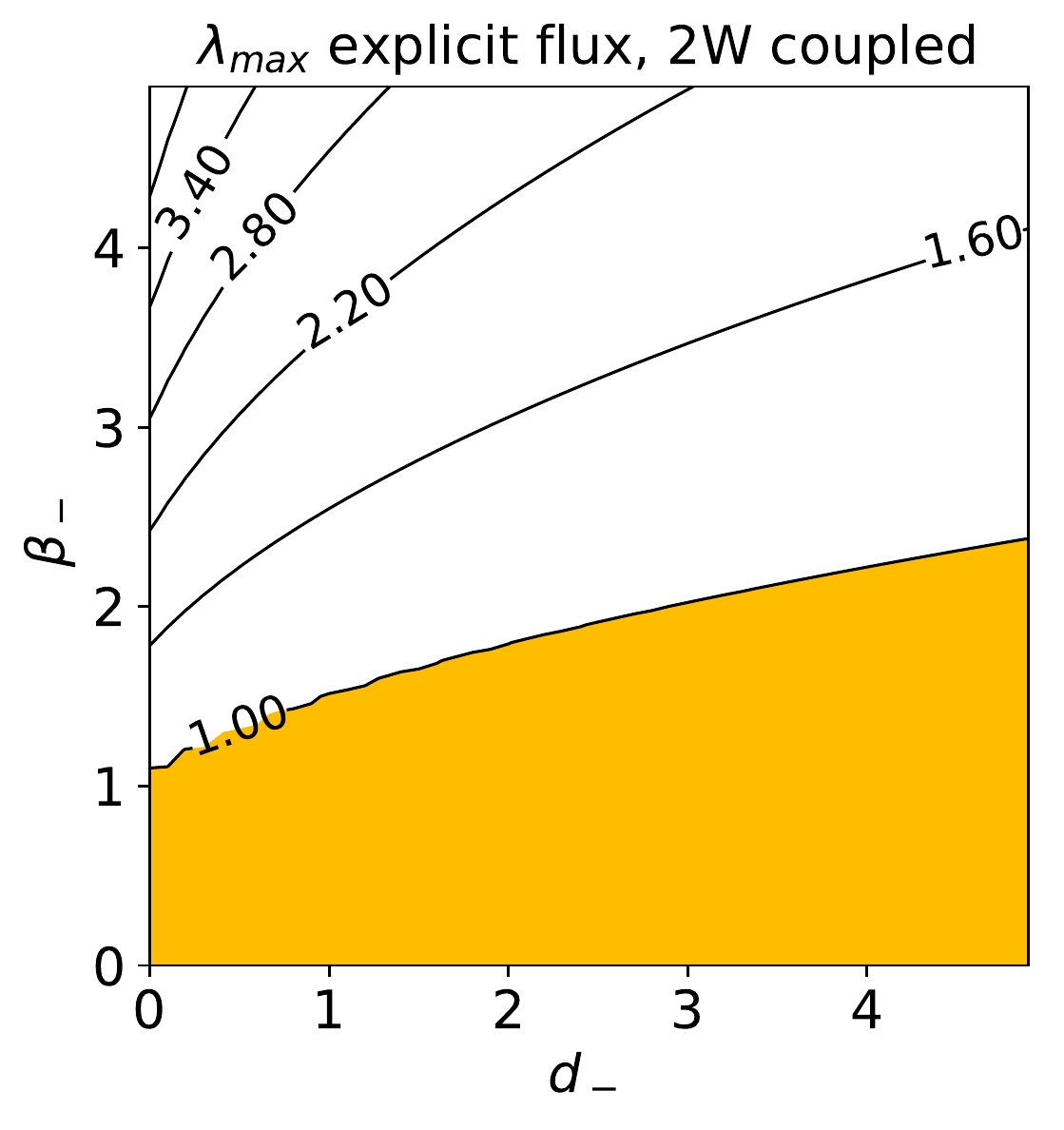}
  \includegraphics[width=0.32\linewidth]{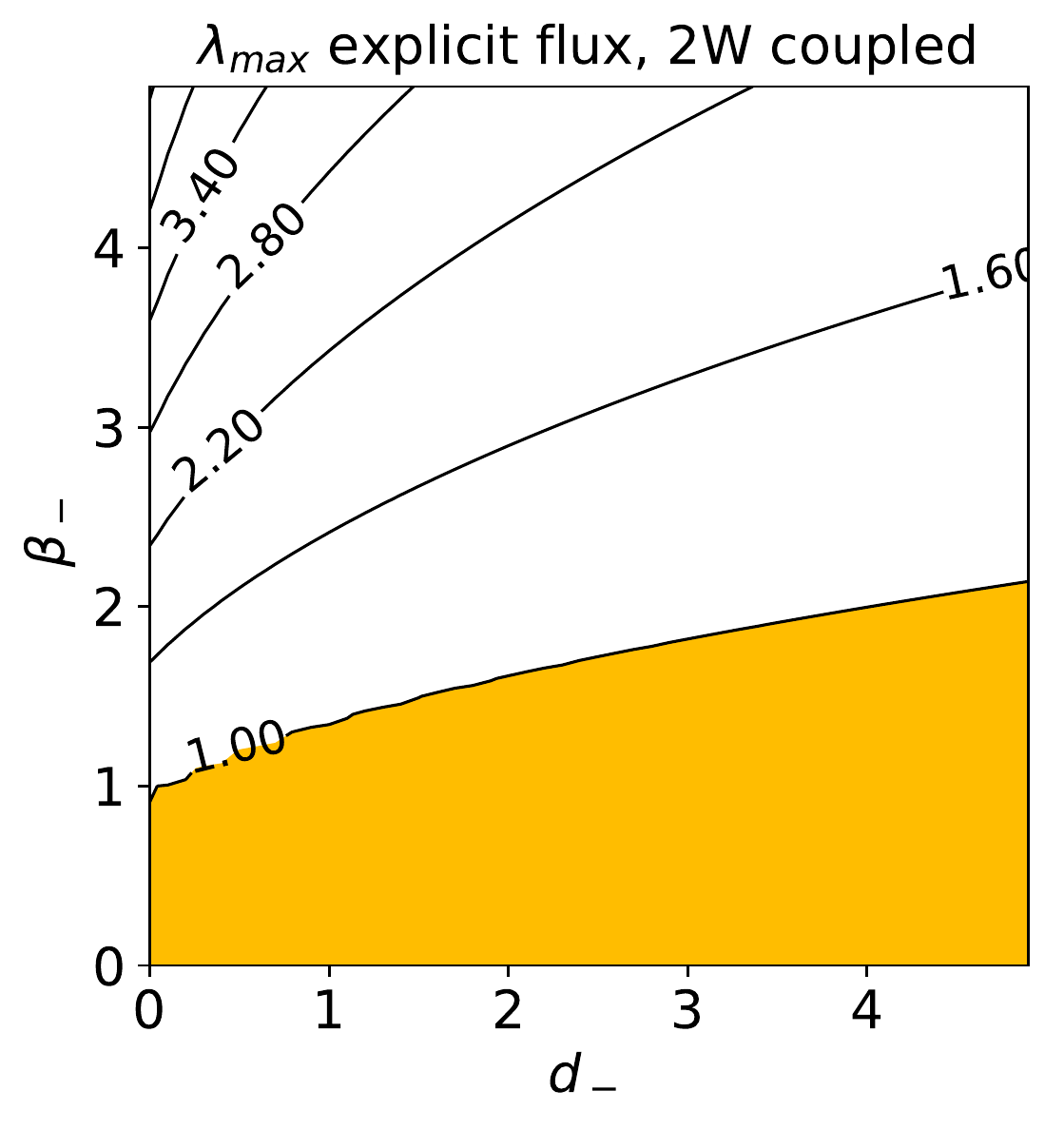}
  \includegraphics[width=0.32\linewidth]{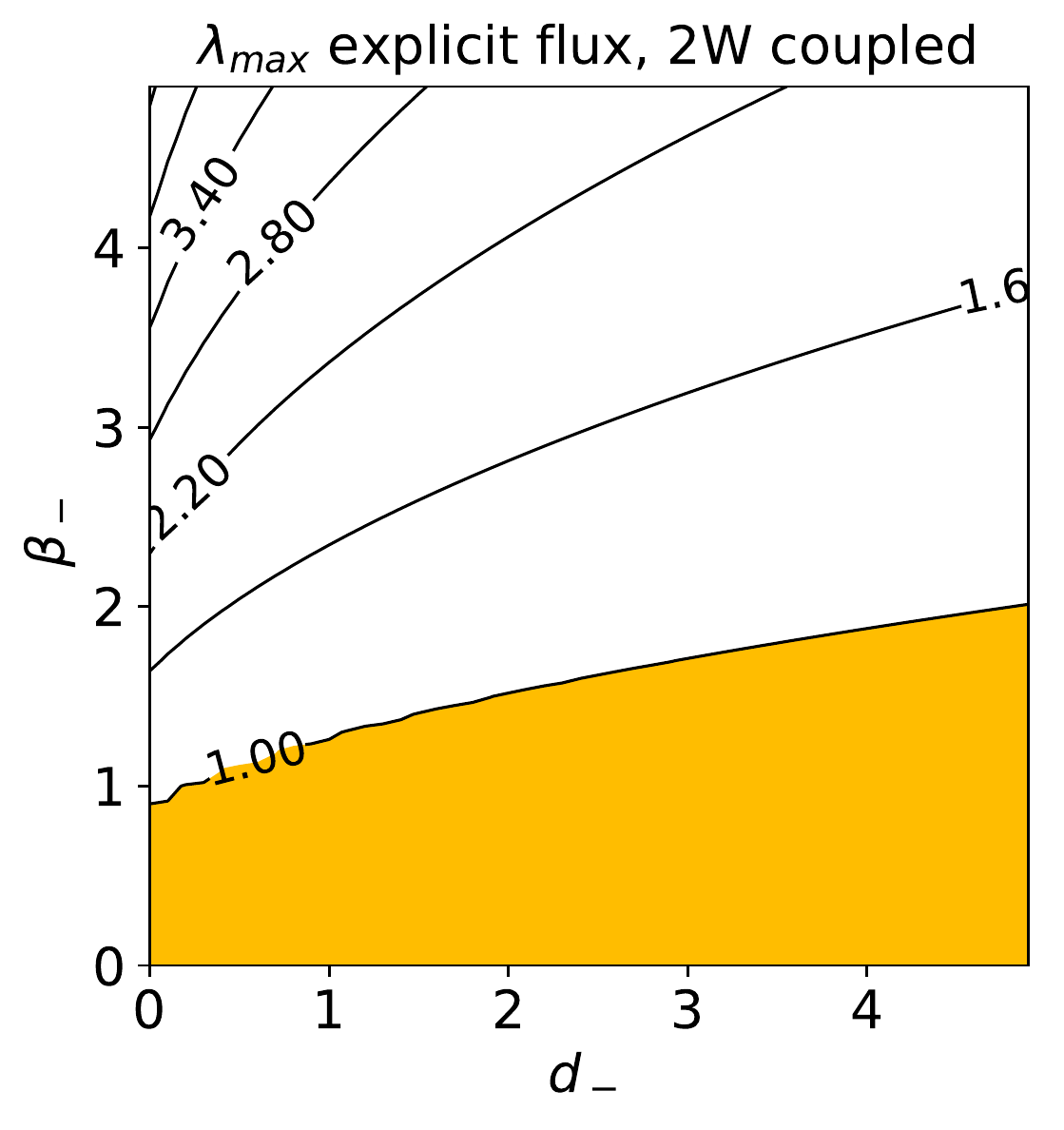}
  \caption{Stability regions for explicit flux coupling with fixed parameters $(\beta_+, d_+)$ set to $(2.375,9.025)$, $(4.875,38.025)$, $(9.875,156.025)$ from left to right for the coupling componenent.}
  \label{fig:exflux_cpl_stab}
\end{figure}

Figure \ref{fig:cpl2_stab} shows the stability regions defined by $\beta_+$ and
$d_+$ with fixed $\beta_-$ and $d_-$. The stability region for the explicit flux
coupling is larger than that obtained from the negative domain in Figure
\ref{fig:cpl_stab}. For some other values of $\beta_-$ and $d_-$, as shown in
Figure \ref{fig:exflux_cpl2_stab}, the stability region appears almost
unchanged. The reason is  that the negative domain is in a stable regime
$\beta_- \ll d_-$ and the effect of the upper right block of $\mathbf{B}$ to the
eigenvalues of $\mathbf{M}$ is insignificant.

All the experiements for explicit flux coupling show that the classical absolte
stability (independent of $d_\pm$) can be recovered in the limiting case
$\beta_-$ or $\beta_+$ approaches zero, as predicted by the analysis in Section
\ref{sec:exfluximts}.
\begin{figure}
  \centering
  \includegraphics[width=0.3\linewidth]{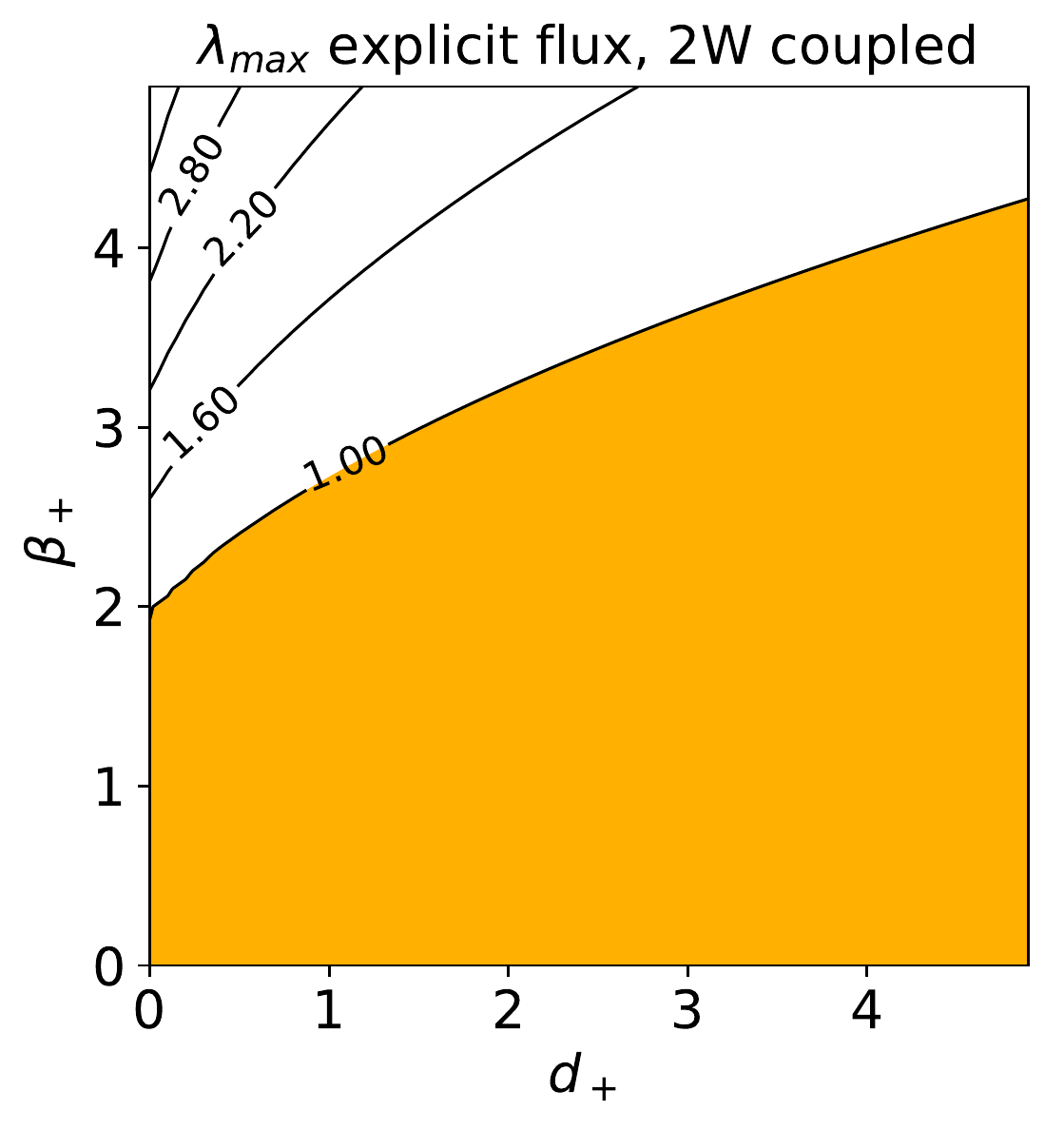}
  \includegraphics[width=0.33\linewidth]{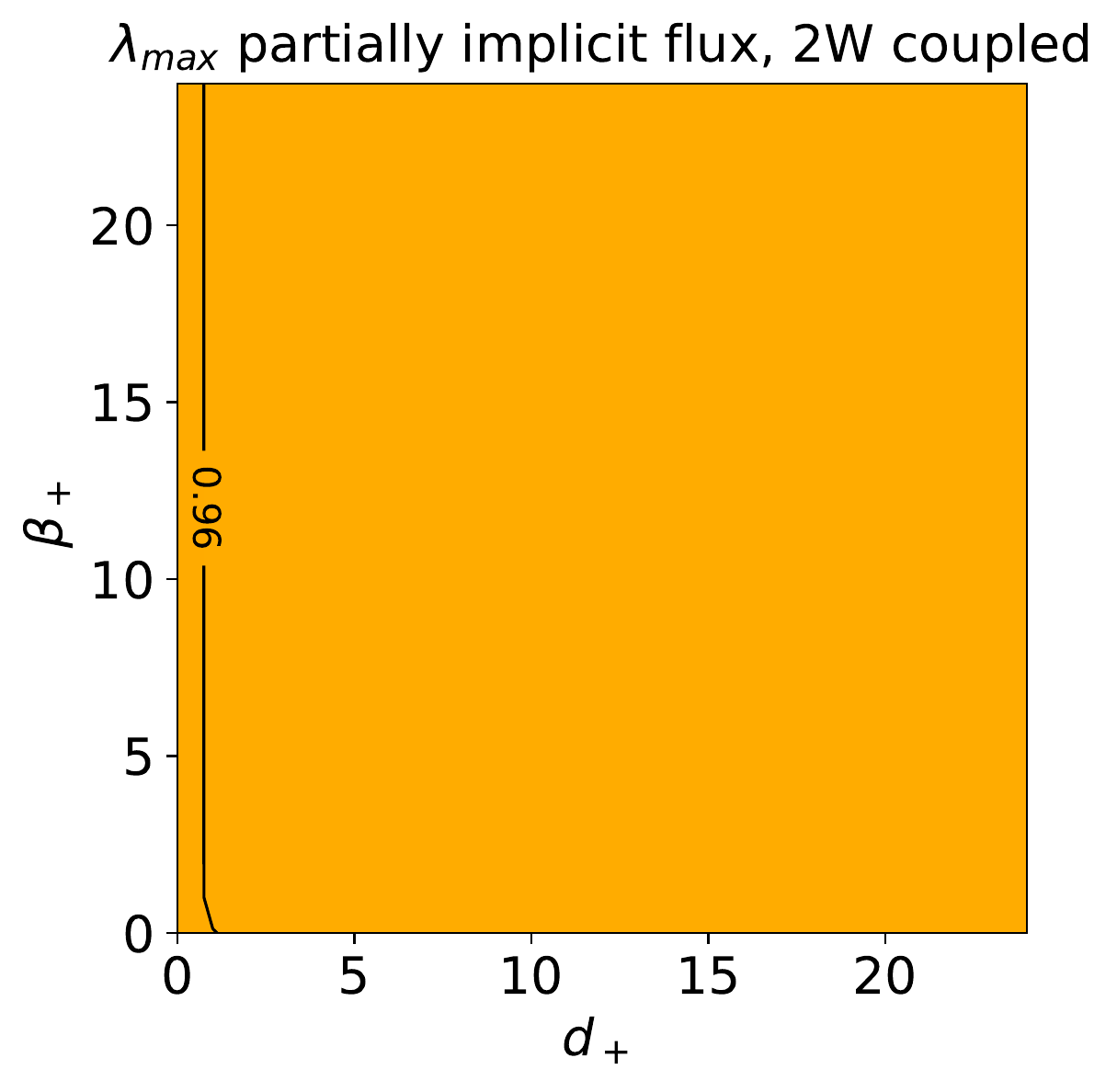}
  \includegraphics[width=0.308\linewidth]{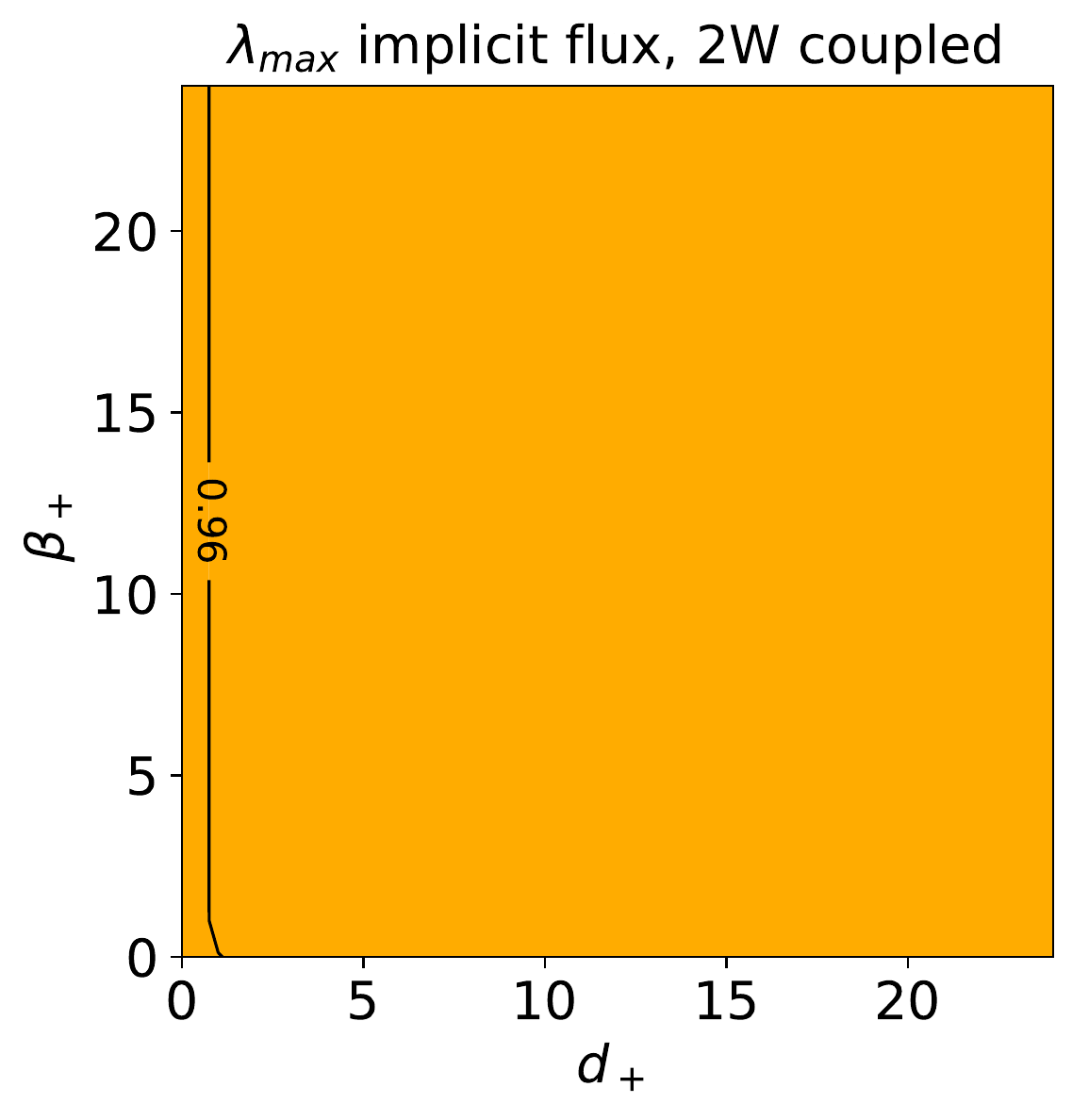}
  \caption{Stability regions for various flux coupling strategies with fixed parameters $\beta_-=0.005938$ and $d_-=9.025$ for the coupling component.}
  \label{fig:cpl2_stab}
\end{figure}
\begin{figure}
  \centering
  \includegraphics[width=0.32\linewidth]{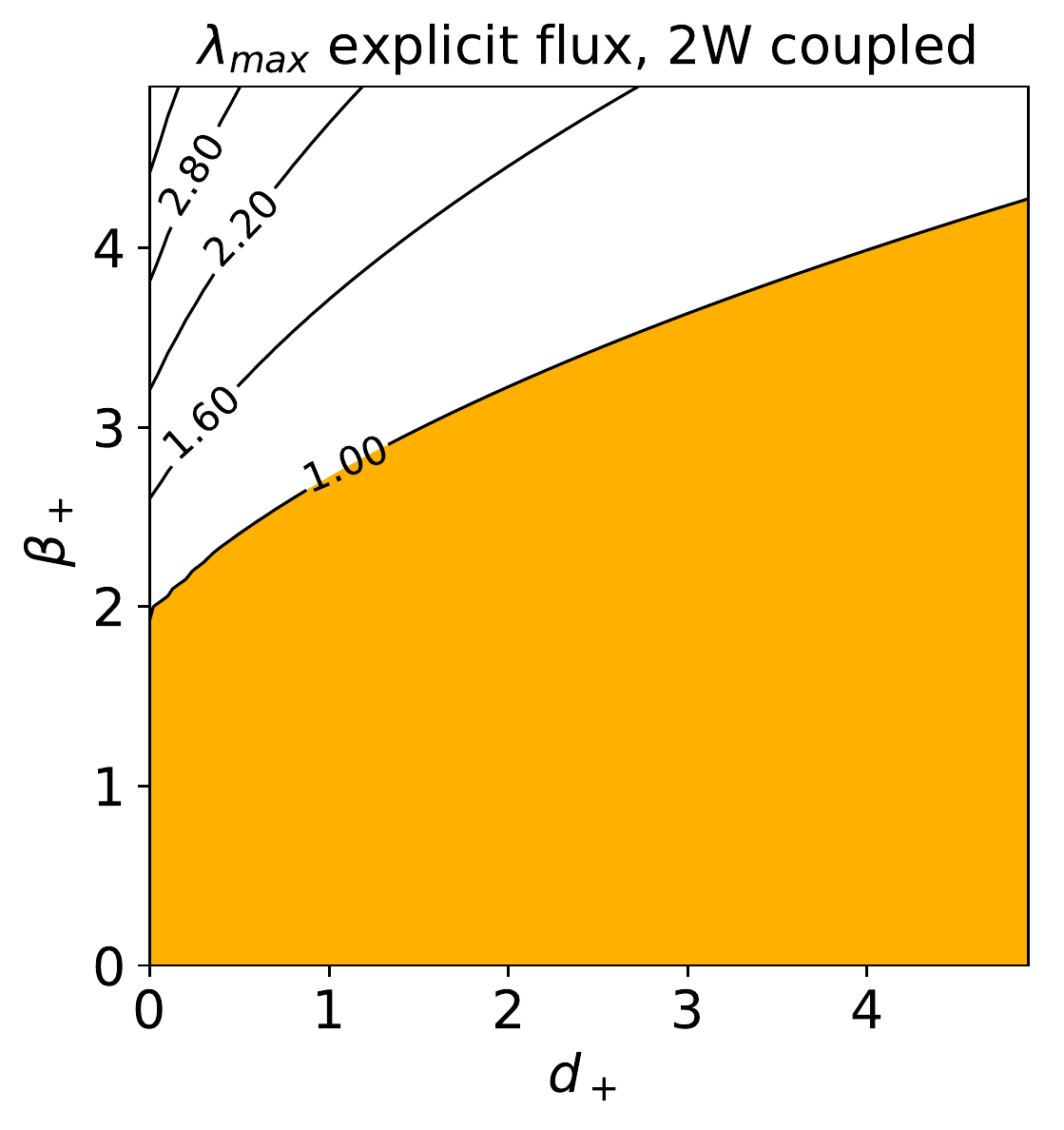}
  \includegraphics[width=0.32\linewidth]{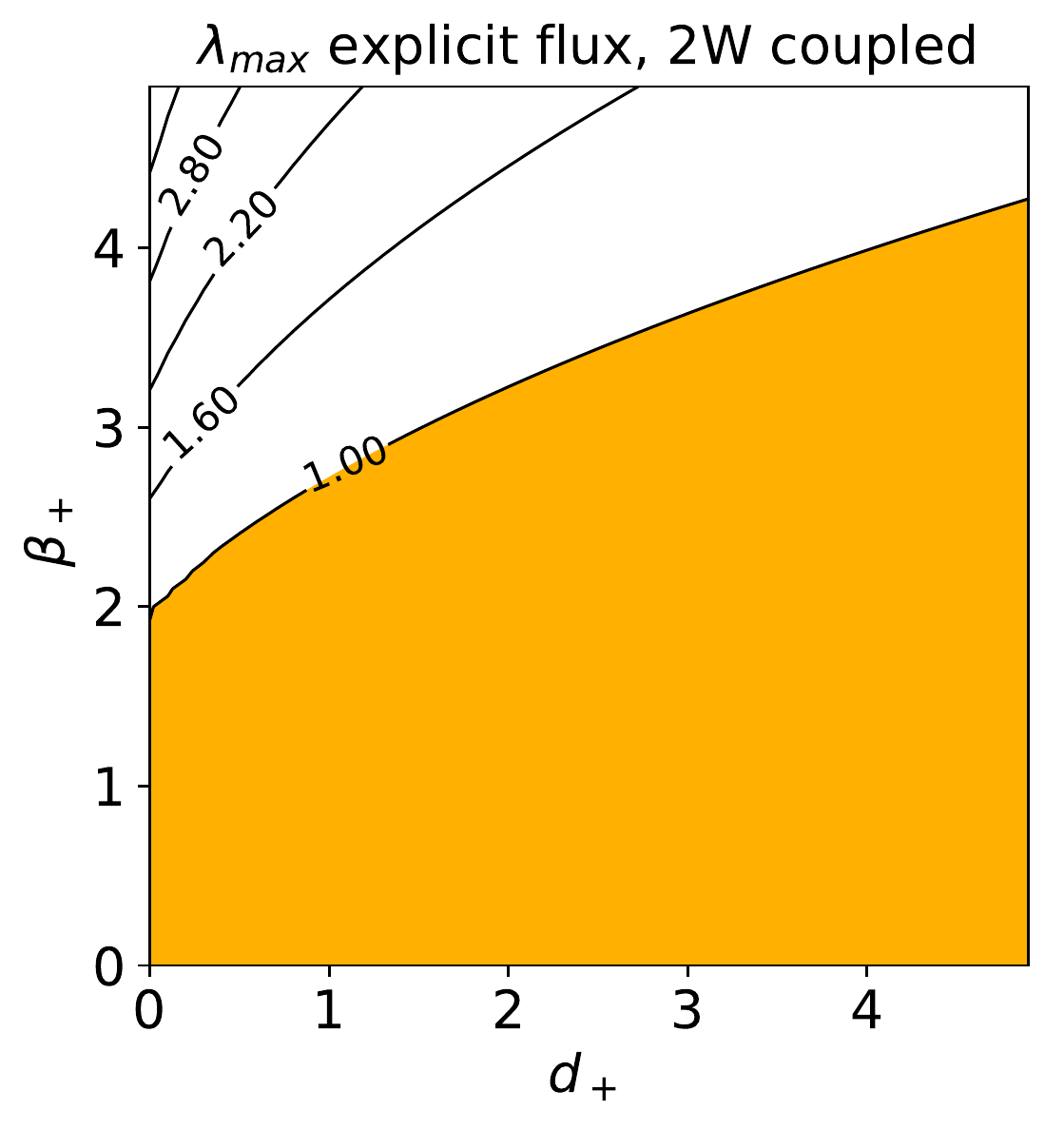}
  \includegraphics[width=0.32\linewidth]{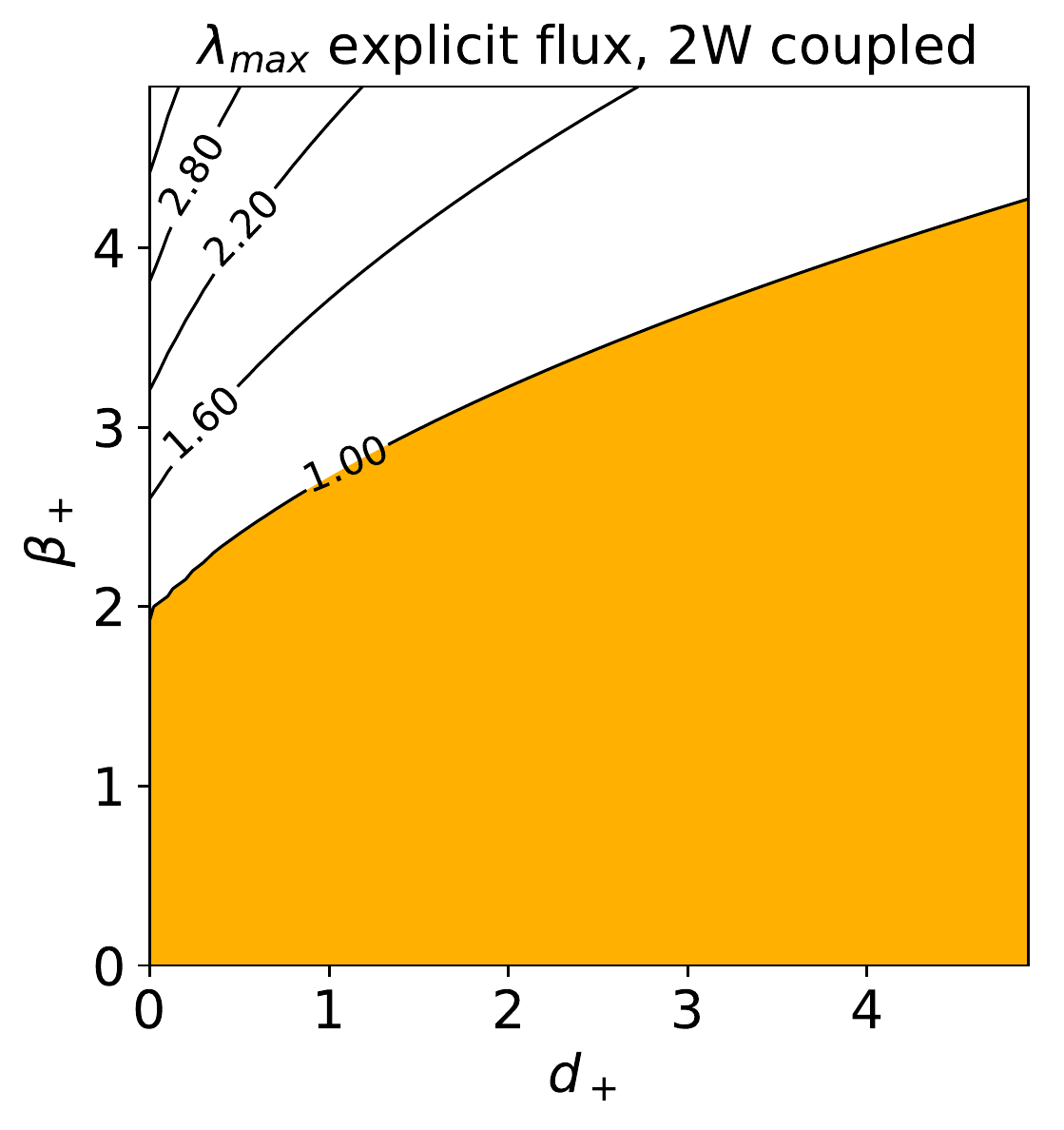}
  \caption{Stability regions for explicit flux coupling with fixed parameters $(\beta_-, d_-)$ set to  $(0.012188,38.025)$, $(0.024688,156.025)$, $(0.049688,632.025)$ from left to right for the coupling component.}
  % nz1 = 40, 80, 160
  \label{fig:exflux_cpl2_stab}
\end{figure}

\subsubsection{One-way coupled model}

Figure \ref{fig:exflux_stab_eigen} plots the stability regions for the one-way
coupled model according to the matrix stability analysis. It agrees with the
theoretical prediction depicted in Figure \ref{fig:exflux_stab}. Interestingly,
it also matches one of the two scenarios shown for the two-way coupled system
(see Figure \ref{fig:exflux_cpl2_stab}) but differs significantly from the other
scenario (see Figure \ref{fig:exflux_cpl_stab}). These results indicate that the
analysis for the one-way coupled model with an assumption of fixed boundary
$T_{\frac12}^n=0$ gives a good approximation to the coupled scenario when the
coupling component is in a stable regime where the bulk Courant number $\beta$
is much less than the diffusion Courant number $d$.
\begin{figure}
  \centering
  \includegraphics[width=0.4\linewidth]{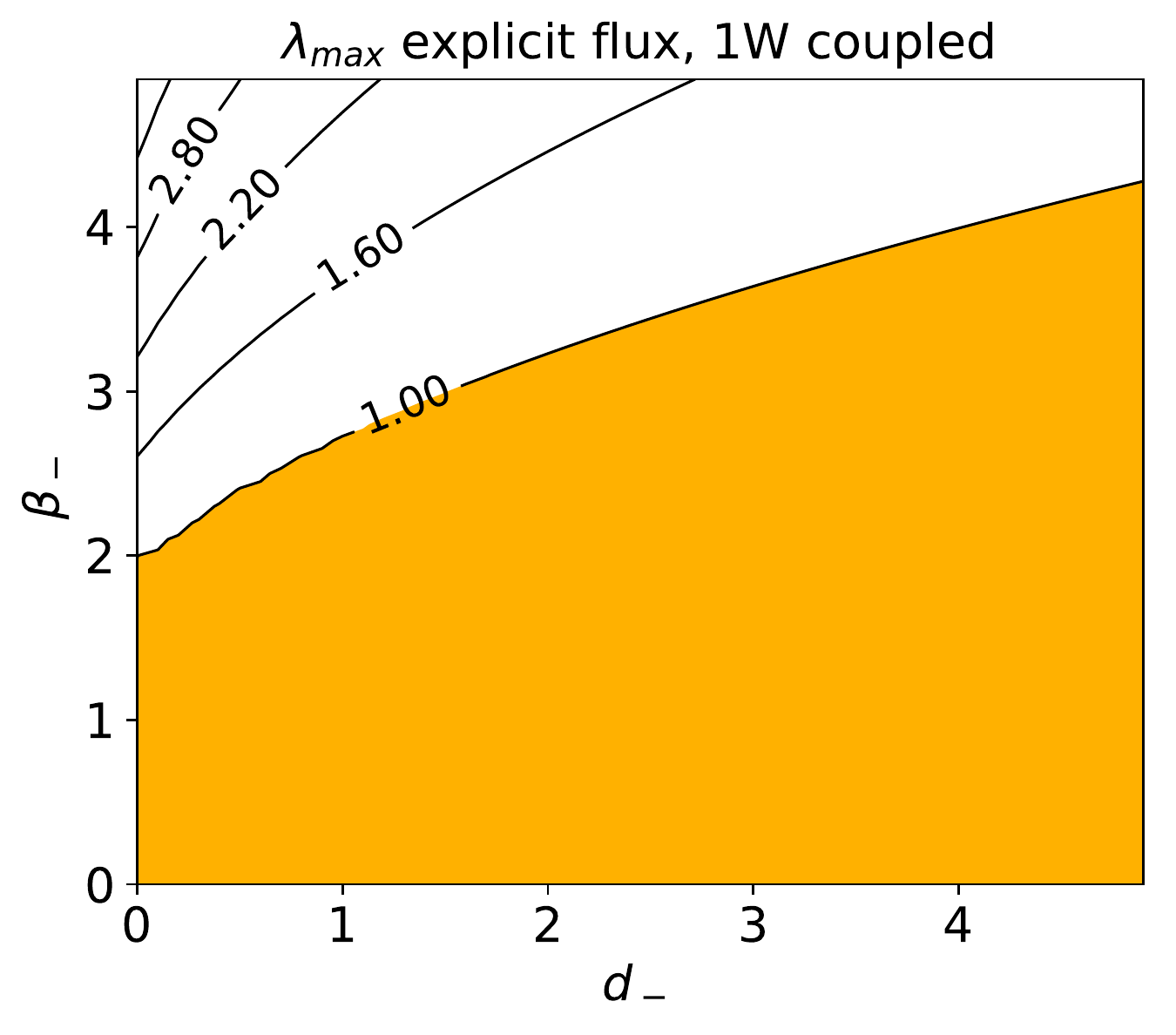}
  \includegraphics[width=0.4\linewidth]{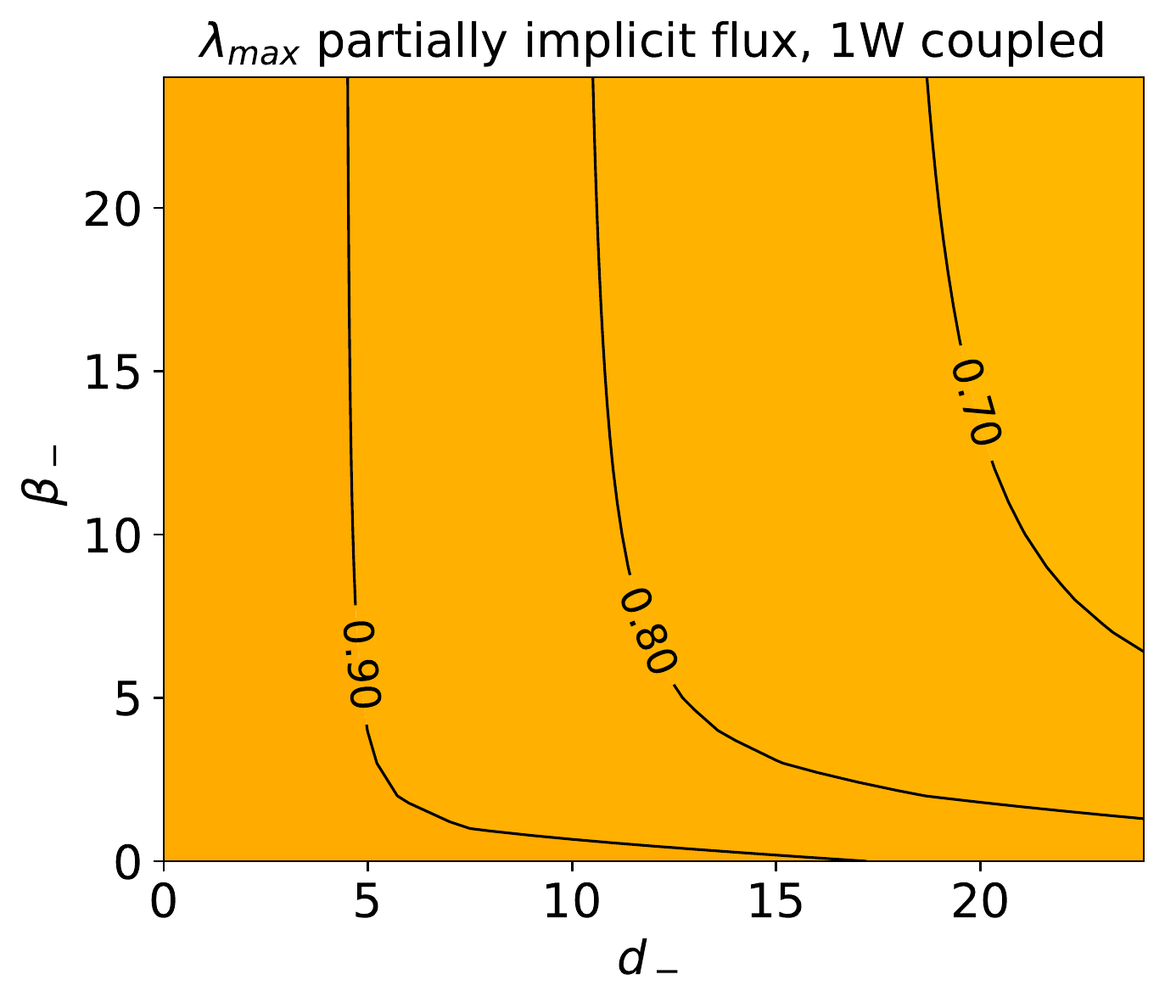}
  \caption{Stability regions for explicit flux coupling (left) and partially implicit flux coupling (right).}
  \label{fig:exflux_stab_eigen}
\end{figure}

% discuss the application in ocean-atmosphere coupling
Therefore, the analysis based on the one-way coupled model in Section
\ref{sec:forced_model} makes sense for atmosphere models forced by ocean models
but may not be ideal for the ocean model because the heat capacity of atmosphere
is much smaller and atmospheric dynamics evolves more rapidly, leading to a
small bulk Courant number $\beta$.
% ideally, the closer to the horizontal axis, the better. d >> beta

This observation can also be explained from the matrix analysis perspective. In
the case $(\theta,\gamma) = (0,0)$ and $\beta_-$ negligible, the stability
matrix can be denoted compactly as
\begin{equation}
\mathbf{M} =  \left[ \begin{array}{c c} A_{11} & 0 \\ 0 & A_{22}\end{array} \right]^{-1}
\left[ \begin{array}{c c} B_{11} & 0 \\ B_{21} & B_{22}\end{array} \right]
= \left[ \begin{array}{c c} A_{11}^{-1} B_{11} & 0 \\ A_{22}^{-1} B_{21} & A_{22}^{-1} B_{22}\end{array} \right] ,
\end{equation}
where $\mathbf{M} \in \R^{n\times n}, A_{11},B_{11} \in \R^{k\times k},
A_{22},B_{22} \in \R^{(n-k)\times (n-k)}, B_{21} \in \R^{(n-k) \times n}$. Then
the eigenvalues $\lambda$ of $\mathbf{M}$ must be roots of
$\textrm{det}(\mathbf{M}-\lambda \mathbf{I}_n) = 0$. We can see that
$\textrm{det}(\mathbf{M}-\lambda \mathbf{I}_n) = \textrm{det} (A_{11}^{-1}
B_{11} -\lambda \mathbf{I}_k) \; \textrm{det}(A_{22}^{-1}B_{22} -\lambda
\mathbf{I}_{n-k})$. Therefore, the spectrum of $\mathbf{M}$ is the union of the
spectrum of $A_{11}^{-1} B_{11}$ and that of $A_{22}^{-1} B_{22}$. If the
coupling domain (corresponding to $A_{22}^{-1} B_{22}$) is in the stable regime,
then the boundary of the stability region of the coupled system is determined by
the spectrum of $A_{11}^{-1} B_{11}$.

For general cases where $\beta_-$ is not negligible in the coupled system, one
can use the Schur complement to obtain $\textrm{det}(\mathbf{M}-\lambda
\mathbf{I}_n) = \textrm{det} (A_{11}^{-1} B_{11} -\lambda \mathbf{I}_k) \;
\textrm{det}(A_{22}^{-1}B_{22} -\lambda \mathbf{I}_{n-k} -
A_{22}^{-1}B_{21}(A_{11}^{-1} B_{11} -\lambda \mathbf{I}_k)^{-1} A_{11}^{-1}
B_{12})$ , if $B_{11}$ is invertible. Clearly, the spectral radius of
$\mathbf{M}$ depends on the spectral radiuses of $A_{11}^{-1} B_{11}$ and its
Schur complement matrix in $\mathbf{M}$. Further, given that $A_{11}^{-1}
B_{11}$ and $A_{22}^{-1}B_{22}$ are stable, $\mathbf{M}$ is not necessarily
stable. This result implies that the stability region for the one-way coupled
model is the best limiting case of stability regions for the two-way coupled
model.

\subsection{Dirichlet-Neumann condition}
The matrix form of the CSS algorithm (\ref{eq:css}) with explicit coupling flux
and the Dirichlet-Neumann condition exhibits a similar structure. For the purely
explicit method, it can be written as
\begin{subequations}
  \footnotesize
  \begin{align}
    \mathbf{A} = \nonumber\left[
    \resizebox{0.26\textwidth}{!}{$\displaystyle
    \begin{array}{c c c c c : c c c c c}
    1       &         &         &         &         &         &         &         &          &         \\
            & 1       &         &         &         &         &         &         &         &         \\
            &         & \ddots  &         &         &         &         &         &         &         \\
            &         &         & 1       &         &         &         &         &         &         \\
            &         &         &         & (1 + r)/2 &  &         &         &         &       \\
             \hdashline
            &         &         &         &   & 1&    &         &         &         \\
            &         &         &         &         &         & 1       &       &         &         \\
            &         &         &         &         &         &         & \ddots    &         &         \\
            &         &         &         &         &         &         &       &         & 1
    \end{array}
    $}
    \right]
    \mathbf{B} =
        \left[
    \resizebox{0.52\textwidth}{!}{$\displaystyle
    \begin{array}{c c c c c : c c c c c}
    1 - 2 d_- &  d_- &        &   &                       &                       &   &        &   &     \\
    d_-  & 1 - 2 d_-  &  d_-      &   &                       &                       &   &        &   &     \\
      &\ddots & \ddots & \ddots  &                       &                       &   &        &   &     \\
      &   &   d_-  & 1 - 2 d_-   & d_-                       &                       &   &        &   &     \\
      &   &        & d_- & (1+r)/2 - d_- - d_+ r &  d_+ r &   &        &   &     \\
           \hdashline
      &   &        &   & d_+    & 1-2 d_+  & d_+  &        &   &     \\
      &   &        &   &                       &               d_+      & 1-2 d_+  &  d_+      &   &     \\
      &   &        &   &                       &                       &   & \ddots &   &     \\
      &   &        &   &                       &                       &   &        &  d_+ & 1 - 2 d_+
    \end{array}
    $}
    \right],
  \end{align}
\end{subequations}
and the implicit algorithm \eqref{eq:implicit_css} corresponds to
\begin{subequations}
  \footnotesize
  \begin{align}
    &\mathbf{A} =
    &\nonumber\left[
    \resizebox{0.5\textwidth}{!}{$\displaystyle
    \begin{array}{c c c c c : c c c c c}
    2 d_-+1 & -d_-    &         &         &         &         &         &         &         &         \\
    -d_-    & 2 d_-+1 & -d_-    &         &         &         &         &         &         &         \\
            & \ddots  & \ddots  & \ddots  &         &         &         &         &         &         \\
            &         &    -d_- & 2 d_-+1 & -d_-    &         &         &         &         &         \\
            &         &         & -d_-    &  (1 + r)/2 + d_-  &         &         &         &         &       \\
             \hdashline
            &         &         &         &         &  d_+ + 1& -d_+    &         &         &         \\
            &         &         &         &         &  -d_+   & 2 d_++1 & -d_+    &         &         \\
            &         &         &         &         &         & \ddots  & \ddots  & \ddots  &         \\
            &         &         &         &         &         &         &    -d_+ & 2 d_++1 & -d_+    \\
            &         &         &         &         &         &         &         & -d_+    & 2 d_++1
    \end{array}
    $}
    \right]
    &\mathbf{B} =
        \left[
    \resizebox{0.3\textwidth}{!}{$\displaystyle
    \begin{array}{c c c c c : c c c c c}
    1 &   &        &   &                       &                       &   &        &   &     \\
      & 1 &        &   &                       &                       &   &        &   &     \\
      &   & \ddots &   &                       &                       &   &        &   &     \\
      &   &        & 1 &                       &                       &   &        &   &     \\
      &   &        &   & (1 + r)/2 - d_+ r     &               d_+ r   &   &        &   &     \\
           \hdashline
      &   &        &   &              d_+      &  1 - d_+ &   &        &   &     \\
      &   &        &   &                       &                       & 1 &        &   &     \\
      &   &        &   &                       &                       &   & \ddots &   &     \\
      &   &        &   &                       &                       &   &        & 1 &     \\
      &   &        &   &                       &                       &   &        &   & 1
    \end{array}
    $}
    \right].
  \end{align}
\end{subequations}
\iflongversion
Futhermore, these also fit into the general formulation
\eqref{eq:matrix_form_bulk} for bulk interface condition if we set
$(\theta,\gamma)=(0,0)$ and set the parameter $b=\nu_+/\Delta z_+$ so that
$\beta_- = d_+ r$.
\fi

We plot the stability regions for different values of $r$ in Figure
\ref{fig:exflux_cpl_stab_dn_euler} and Figure \ref{fig:exflux_cpl_stab_dn} and
verify our stability predictions in Section \ref{sec:dn}. We can see from Figure
\ref{fig:exflux_cpl_stab_dn_euler} that the stability region for the purely
explicit method is determined by the CFL condition and does not change as $r$
varies. Nonetheless, Figure \ref{fig:exflux_cpl_stab_dn} shows that the
stability region for the implicit method shrinks as $r$ increases and expands as
$r$ approaches to zero. Moreover, the stability region is infinite in the $d_+$
direction, and the boundary moves along the $d_-$ direction as $r$ varies,
indicating that instability can arise from the negative domain, at the interface
of which the Neumann condition is imposed, but not from the positive domain. In
practical ocean-atmosphere systems, $r$ is typically at the order of $10^{-4}$
or $10^{-5}$; thus it is reasonable to use the Neumann condition for ocean and
the Dirichlet condition for atmosphere theoretically.
\begin{figure}
  \centering
  \includegraphics[width=0.32\linewidth]{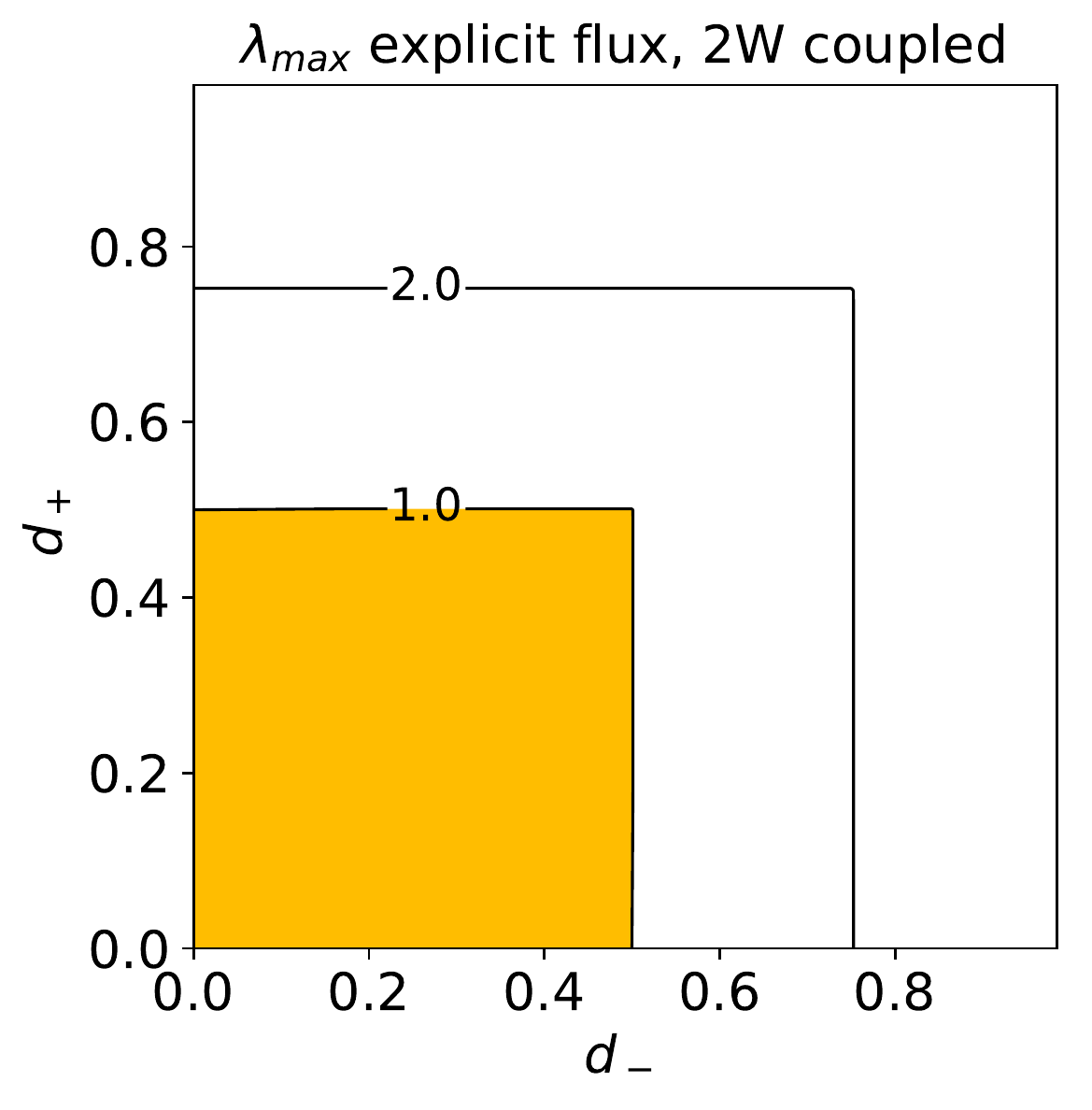}
  \includegraphics[width=0.32\linewidth]{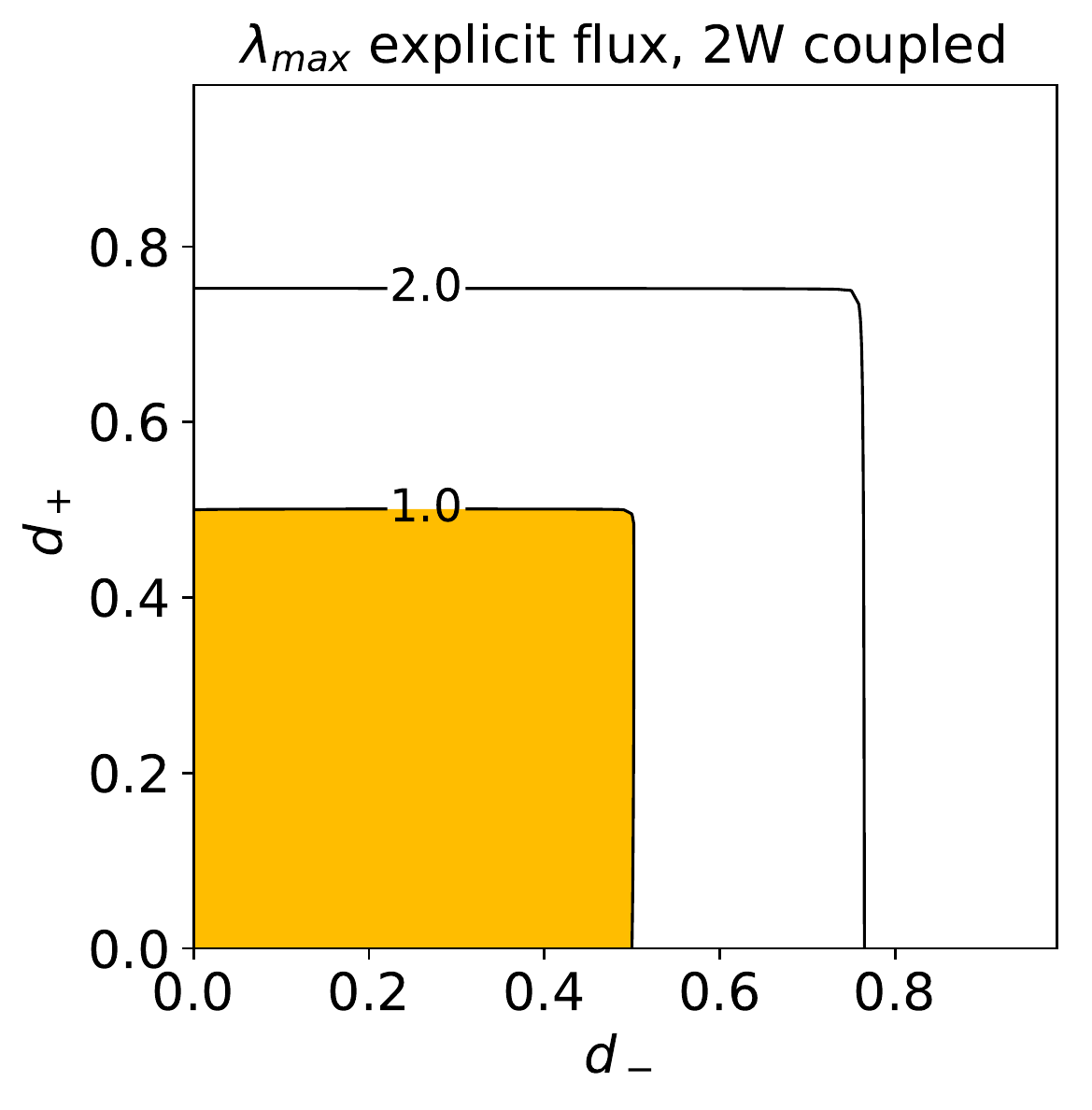}
  \includegraphics[width=0.32\linewidth]{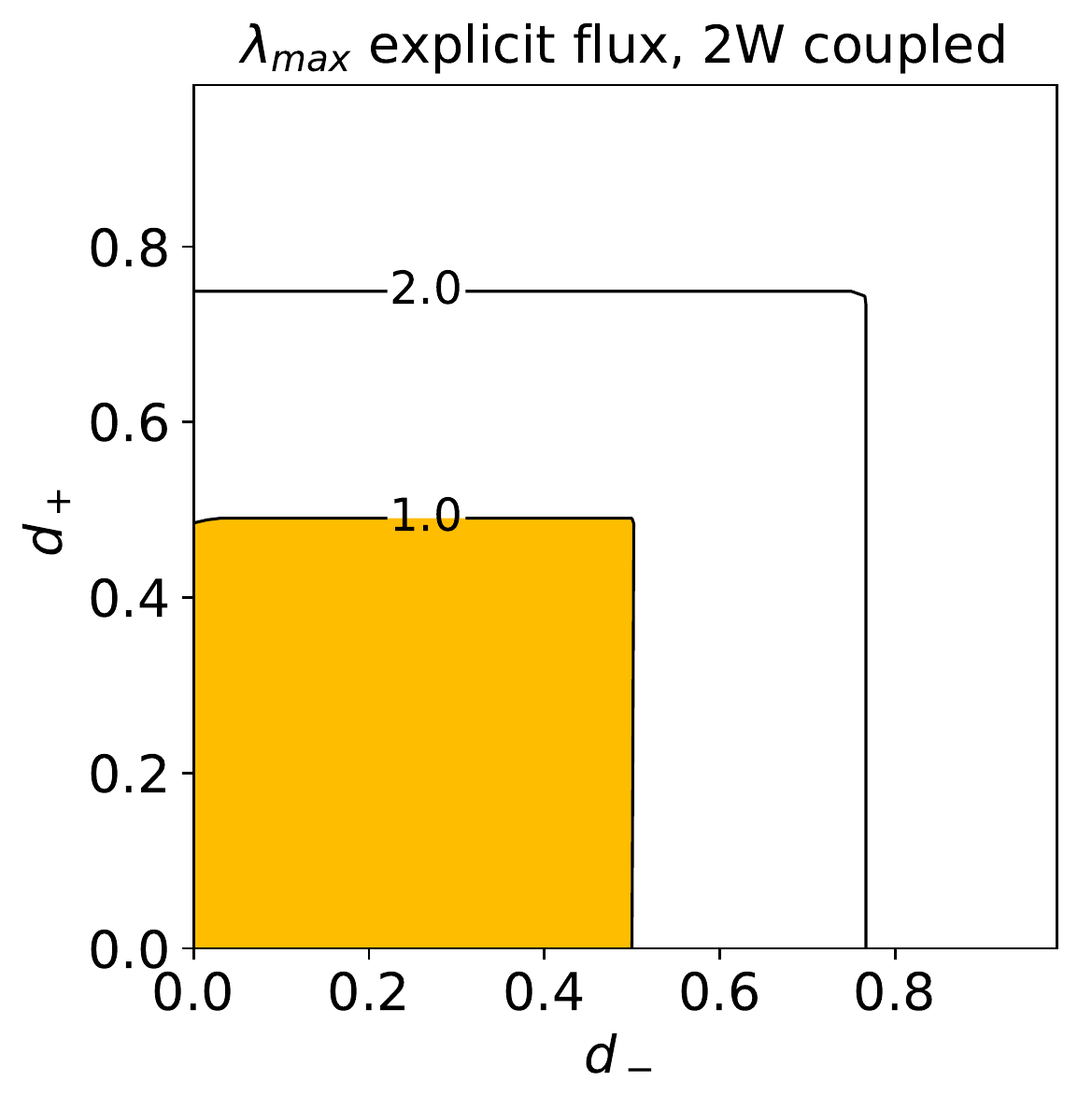}
  \caption{Stability regions for explicit integration methods with explicit flux coupling and Dirchlet-Neumann condition. From left to right, $r=2000,1,5e-4$ respectively.}
  \label{fig:exflux_cpl_stab_dn_euler}
\end{figure}
\begin{figure}
  \centering
  \includegraphics[width=0.32\linewidth]{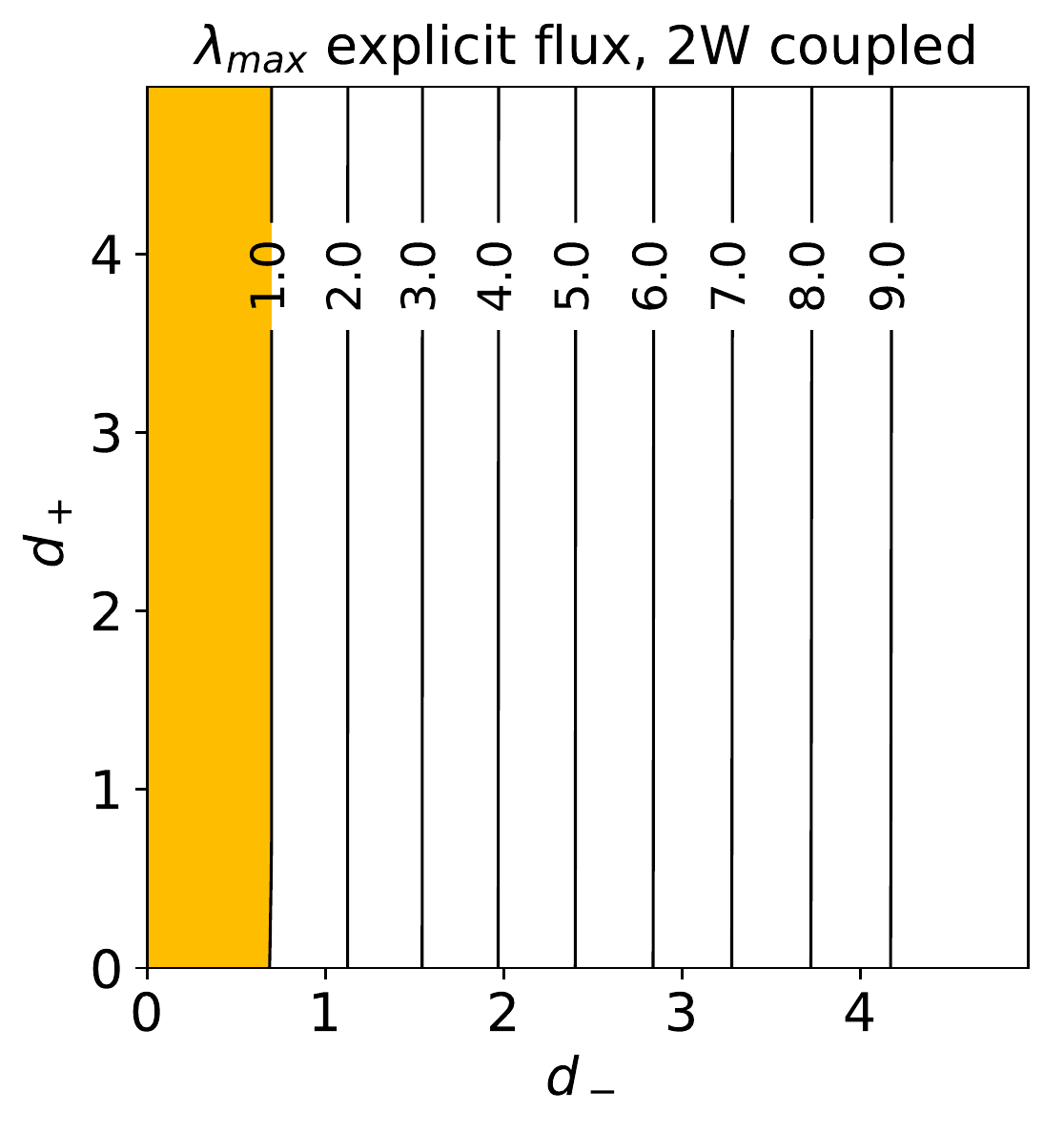}
  \includegraphics[width=0.32\linewidth]{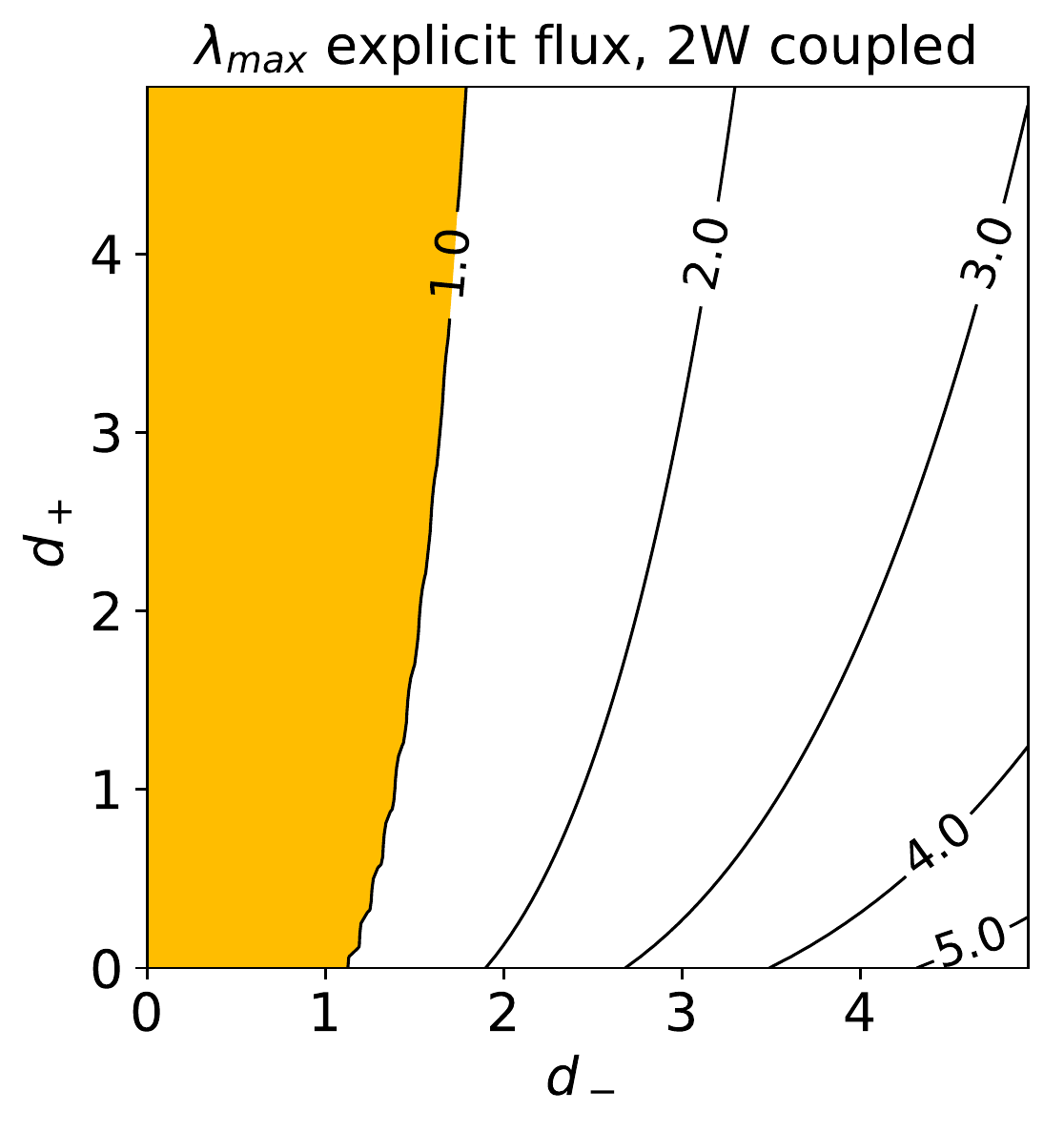}
  \includegraphics[width=0.32\linewidth]{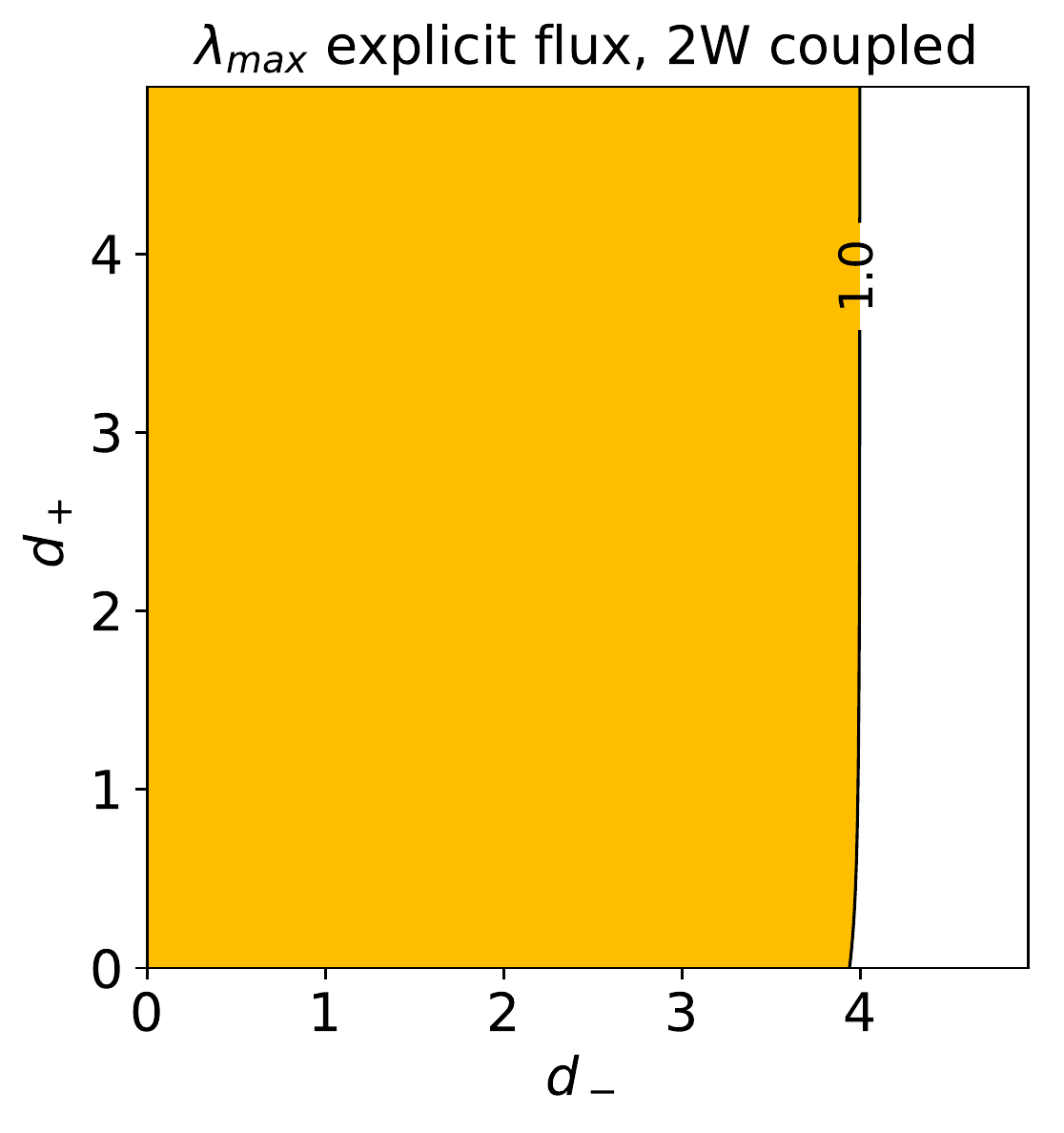}
  \caption{Stability regions for implicit integration methods with explicit flux coupling and Dirchlet-Neumann condition. From left to right, $r=2000,1,5e-4$ respectively.}
  \label{fig:exflux_cpl_stab_dn}
\end{figure}

\section{Conclusion}
The stability characteristics of common partitioned coupling algorithms for
ocean-atmosphere interactions have been studied with normal mode analysis and
matrix eigenvalue analysis. Three different flux coupling schemes with both
explicit and implicit time-stepping methods have been analyzed. Because of the
special modeling strategy of the interfacial physics in climate models, the bulk
interface condition is the focus of this work, although the classic
Dirichlet-Neumann condition that is widely used for FSI problems is also
included for completeness.

We show that the Dirichlet-Neumann condition imposed on the two coupled
components does not affect stability for a purely explicit scheme but becomes a
critical factor for an implicit method with explicit updating of the interfacial
flux. The influence of the bulk interface condition is characterized with a
variable that is formally similar to the Courant number. We show that the
coupled system is unconditionally stable when partially implicit time-stepping
methods are used for individual components and implicit flux coupling is used
for interface nodes. In addition, we derive CFL-like stability conditions for
the one-way coupled system and discuss the links between one-way and two-way
coupled systems. The theory is supported by numerical experiments based on
matrix eigenvalue analysis. Our results suggest that stability analysis of the
one-way coupled model should be used with caution: it is most effective when
used to reflect the stability behavior of the two-way coupled model when the
bulk Courant number of the coupling component is small (e.g., atmospheric models
forced by ocean models).
% need to discuss if these conditions are easy to satisfy in practice.

The results of the analysis performed for the 1D diffusion model is also
applicable to real 3D models, because the heat and turbulence in the
ocean-atmosphere circulation transfer mainly in the vertical direction. A
general circulation model can be considered to be a collection of many
single-column models, each of which may be associated with different bulk
Courant numbers due to variability in surface dynamics. Therefore, numerical
instability issues may arise in a subset of the models. Our analysis potentially
provides a framework to identify where instability may occur, and the results
can be used as guidelines for choosing proper flux coupling strategies and
developing new time-stepping methods with better stability properties.
%The coupled normal mode analyses remain valid using the values of $d_-$, $d +$
%based on the timesteps $\Delta t, \Delta t +$ used in the two domains.

\bibliography{hong}

\begin{thebibliography}{10}
\providecommand{\url}[1]{{#1}}
\providecommand{\urlprefix}{URL }
\expandafter\ifx\csname urlstyle\endcsname\relax
  \providecommand{\doi}[1]{DOI~\discretionary{}{}{}#1}\else
  \providecommand{\doi}{DOI~\discretionary{}{}{}\begingroup
  \urlstyle{rm}\Url}\fi

\bibitem{Aggul2018}
Aggul, M., Connors, J.M., Erkmen, D., Labovsky, A.E.: {A defect-deferred
  correction method for fluid-fluid interaction}.
\newblock SIAM Journal on Numerical Analysis \textbf{56}(4), 2484--2512 (2018).
\newblock \doi{10.1137/17M1148219}

\bibitem{Andreas2010}
Andreas, E.L., Horst, T.W., Grachev, A.A., Persson, P.O.G., Fairall, C.W.,
  Guest, P.S., Jordan, R.E.: {Parametrizing turbulent exchange over summer sea
  ice and the marginal ice zone}.
\newblock Quarterly Journal of the Royal Meteorological Society
  \textbf{136}(649), 927--943 (2010).
\newblock \doi{10.1002/qj.618}

\bibitem{Bao2000}
Bao, J.W., Wilczak, J.M., Choi, J.K., Kantha, L.H.: {Numerical simulations of
  air-sea interaction under high wind conditions using a coupled model: A study
  of Hurricane development}.
\newblock Monthly Weather Review \textbf{128}(7 I), 2190--2210 (2000).
\newblock \doi{10.1175/1520-0493(2000)128<2190:NSOASI>2.0.CO;2}

\bibitem{Bazilevs2008}
Bazilevs, Y., Calo, V.M., Hughes, T.J., Zhang, Y.: {Isogeometric
  fluid-structure interaction: Theory, algorithms, and computations}.
\newblock Computational Mechanics \textbf{43}(1), 3--37 (2008).
\newblock \doi{10.1007/s00466-008-0315-x}

\bibitem{Beljaars2017}
Beljaars, A., Dutra, E., Balsamo, G., Lemari{\'{e}}, F.: {On the numerical
  stability of surface–atmosphere coupling in weather and climate models}.
\newblock Geoscientific Model Development \textbf{10}(2), 977--989 (2017).
\newblock \doi{10.5194/gmd-10-977-2017}

\bibitem{BrethBattisti2000}
Bretherton, C.S., Battisti, D.S.: An interpretation of the results from
  atmospheric general circulation models forced by the time history of the
  observed sea surface temperature distribution.
\newblock Geophysical Research Letters \textbf{27}(6), 767--770 (2000).
\newblock \doi{10.1029/1999GL010910}

\bibitem{Bryan1996}
Bryan, F.o., Kauffman, B.G., Large, W.G., Gent, P.R.: {The NCAR CSM flux
  coupler}.
\newblock Technical Report NCAR/TN-424+STR, {NCAR} (1996)

\bibitem{Causin2005}
Causin, P., Gerbeau, J.F., Nobile, F.: {Added-mass effect in the design of
  partitioned algorithms for fluid-structure problems}.
\newblock Computer Methods in Applied Mechanics and Engineering
  \textbf{194}(42-44), 4506--4527 (2005).
\newblock \doi{10.1016/j.cma.2004.12.005}

\bibitem{Connors2009}
Connors, J.M., Howell, J.S., Layton, W.J.: {Partitioned time stepping for a
  parabolic two domain problem}.
\newblock SIAM Journal on Numerical Analysis \textbf{47}(5), 3526--3549 (2009).
\newblock \doi{10.1137/080740891}

\bibitem{Connors2012}
Connors, J.M., Howell, J.S., Layton, W.J.: {Decoupled time stepping methods for
  fluid-fluid interaction}.
\newblock SIAM Journal on Numerical Analysis \textbf{50}(3), 1297--1319 (2012).
\newblock \doi{10.1137/090773362}

\bibitem{Connors2011}
Connors, J.M., Miloua, A.: {Partitioned time discretization for parallel
  solution of coupled ODE systems}.
\newblock BIT Numerical Mathematics \textbf{51}(2), 253--273 (2011).
\newblock \doi{10.1007/s10543-010-0295-z}

\bibitem{Degroote2009}
Degroote, J., Bathe, K.J., Vierendeels, J.: {Performance of a new partitioned
  procedure versus a monolithic procedure in fluid-structure interaction}.
\newblock Computers and Structures \textbf{87}(11-12), 793--801 (2009).
\newblock \doi{10.1016/j.compstruc.2008.11.013}

\bibitem{Edson2013}
Edson, J.B., Jampana, V., Weller, R.A., Bigorre, S.P., Plueddemann, A.J.,
  Fairall, C.W., Miller, S.D., Mahrt, L., Vickers, D., Hersbach, H.: {On the
  exchange of momentum over the open ocean}.
\newblock Journal of Physical Oceanography \textbf{43}(8), 1589--1610 (2013).
\newblock \doi{10.1175/JPO-D-12-0173.1}

\bibitem{Fairall1996}
Fairall, C.W., Bradley, E.F., Rogers, D.P., Edson, J.B., Young, G.S.: {Bulk
  parameterization of air-sea fluxes for tropical oceanglobal atmosphere
  coupled-ocean atmosphere response experiment}.
\newblock Journal of Geophysical Research C: Oceans \textbf{101}(C2),
  3747--3764 (1996).
\newblock \doi{10.1029/95JC03205}

\bibitem{Farhat2006}
Farhat, C., van~der Zee, K.G., Geuzaine, P.: {Provably second-order
  time-accurate loosely-coupled solution algorithms for transient nonlinear
  computational aeroelasticity}.
\newblock Computer Methods in Applied Mechanics and Engineering
  \textbf{195}(17-18), 1973--2001 (2006).
\newblock \doi{10.1016/j.cma.2004.11.031}

\bibitem{Giles1997}
Giles, M.B.: {Stability analysis of numerical interface conditions in
  fluid-structure thermal analysis}.
\newblock International Journal for Numerical Methods in Fluids \textbf{25}(4),
  421--436 (1997).
\newblock
  \doi{10.1002/(SICI)1097-0363(19970830)25:4<421::AID-FLD557>3.0.CO;2-J}

\bibitem{Godunov1963}
Godunov, S.K., Ryaben'kii, V.S.: Spectral stability criteria of boundary value
  problems for non-self-adjoint difference equations.
\newblock Russian Mathematical Surveys \textbf{18}(3), 1--12 (1963)

\bibitem{Golaz2019}
Golaz~etc., J.: {The DOE E3SM coupled model version 1: Overview and evaluation
  at standard resolution}.
\newblock Journal of Advances in Modeling Earth Systems  (2019).
\newblock \doi{10.1029/2018ms001603}

\bibitem{Gustafsson1972}
Gustafsson, B., Kreiss, H.O., Sundstr{\"{o}}m, A.: {Stability theory of
  difference approximations for mixed initial boundary value problems. II}.
\newblock Mathematics of Computation \textbf{26}(119), 649--649 (1972).
\newblock \doi{10.1090/S0025-5718-1972-0341888-3}

\bibitem{Hallberg2014}
Hallberg, R.: Numerical instabilities of the ice/ocean coupled system.
\newblock CLIVAR Exchanges \textbf{19}(69), 38--42 (2014)

\bibitem{Hurrell2013}
Hurrell, J.W., Holland, M.M., Gent, P.R., Ghan, S., Kay, J.E., Kushner, P.J.,
  Lamarque, J.F., Large, W.G., Lawrence, D., Lindsay, K., Lipscomb, W.H., Long,
  M.C., Mahowald, N., Marsh, D.R., Neale, R.B., Rasch, P., Vavrus, S.,
  Vertenstein, M., Bader, D., Collins, W.D., Hack, J.J., Kiehl, J., Marshall,
  S.: {The community earth system model: A framework for collaborative
  research}.
\newblock Bulletin of the American Meteorological Society  (2013).
\newblock \doi{10.1175/BAMS-D-12-00121.1}

\bibitem{Kreiss1968}
Kreiss, H.O.: {Stability theory for difference approximations of mixed initial
  boundary value problems, I}.
\newblock Mathematics of Computation \textbf{22}(104), 703--714 (1968)

\bibitem{Kudryavtsev2014}
Kudryavtsev, V., Chapron, B., Makin, V.: {Impact of wind waves on the air-sea
  fluxes: A coupled model}.
\newblock Journal of Geophysical Research: Oceans  (2014)

\bibitem{Kushnir2002}
Kushnir, Y., Robinson, W., Blade, I., Hall, N., Peng, S., Sutton, R.:
  {Atmospheric GCM response to extratropical SST anomalies: Synthesis and
  evaluation}.
\newblock Journal of Climate \textbf{15}(16), 2233--2256 (2002).
\newblock \doi{10.1175/1520-0442(2002)015<2233:AGRTES>2.0.CO;2}

\bibitem{Lemarie2015}
Lemarie, F., Blayo, E., Debreu, L.: {Analysis of ocean-atmosphere coupling
  algorithms: Consistency and stability}.
\newblock In: Procedia Computer Science, vol.~51, pp. 2066--2075. Elsevier
  (2015).
\newblock \doi{10.1016/j.procs.2015.05.473}

\bibitem{Liu2007}
Liu, J., Zhang, Z., Horton, R.M., Wang, C., Ren, X.: Variability of {North
  Pacific} sea ice and {East Asia-North Pacific} winter climate.
\newblock Journal of Climate \textbf{20}(10), 1991--2001 (2007).
\newblock \doi{10.1175/JCLI4105.1}

\bibitem{Liu1979}
Liu, W.T., Katsaros, K.B., Businger, J.A., Liu, W.T., Katsaros, K.B., Businger,
  J.A.: {Bulk parameterization of air-sea exchanges of heat and water vapor
  including the molecular constraints at the interface}.
\newblock Journal of the Atmospheric Sciences \textbf{36}(9), 1722--1735
  (1979).
\newblock \doi{10.1175/1520-0469(1979)036<1722:BPOASE>2.0.CO;2}

\bibitem{Liu2004}
Liu, Z., Zhang, Q., Wu, L.: {Remote impact on tropical Atlantic climate
  variability: Statistical assessment and dynamic assessment}.
\newblock Journal of Climate \textbf{17}(7), 1529--1549 (2004).
\newblock \doi{10.1175/1520-0442(2004)017<1529:RIOTAC>2.0.CO;2}

\bibitem{Osher1969}
Osher, S.: Stability of difference approximations of dissipative type for mixed
  initial-boundary value problems.
\newblock Mathematics of Computation \textbf{23}, 335 (1969).
\newblock \doi{10.1090/S0025-5718-1969-0246530-8}

\bibitem{Perlin2007}
Perlin, N., Skyllingstad, E.D., Samelson, R.M., Barbour, P.L.: {Numerical
  simulation of air-sea coupling during coastal upwelling}.
\newblock Journal of Physical Oceanography \textbf{37}(8), 2081--2093 (2007).
\newblock \doi{10.1175/JPO3104.1}

\bibitem{Peterson2019}
Peterson, K., Bochev, P., Kuberry, P.: Explicit synchronous partitioned
  algorithms for interface problems based on lagrange multipliers.
\newblock Computers \& Mathematics with Applications \textbf{78}(2), 459 -- 482
  (2019).
\newblock \doi{https://doi.org/10.1016/j.camwa.2018.09.045}.
\newblock
  \urlprefix\url{http://www.sciencedirect.com/science/article/pii/S0898122118305637}

\bibitem{Roberts2015}
Roberts, A., Craig, A., Maslowski, W., Osinski, R., Duvivier, A., Hughes, M.,
  Nijssen, B., Cassano, J., Brunke, M.: Simulating transient ice-ocean {E}kman
  transport in the {Regional Arctic System Model and Community Earth System
  Model}.
\newblock Annals of Glaciology \textbf{56}(69), 211–--228 (2015).
\newblock \doi{10.3189/2015AoG69A760}

\bibitem{Roux2009}
Roux, F.X., Garaud, J.D.: Domain decomposition methodology with {R}obin
  interface matching conditions for solving strongly coupled fluid-structure
  problems.
\newblock International Journal for Multiscale Computational Engineering
  \textbf{7}(1), 29--38 (2009).
\newblock \doi{10.1615/intjmultcompeng.v7.i1.50}

\bibitem{Smith1988}
Smith, S.D.: {Coefficients for sea surface wind stress, heat flux, and wind
  profiles as a function of wind speed and temperature}.
\newblock Journal of Geophysical Research: Oceans \textbf{93}(C12),
  15467--15472 (1988).
\newblock \doi{10.1029/JC093iC12p15467}

\bibitem{Yaglom_1994}
Yaglom, A.: Fluctuation spectra and variances in convective turbulent boundary
  layers: {A} reevaluation of old models.
\newblock Physics of Fluids \textbf{6}(2), 962--972 (1994).
\newblock \doi{10.1063/1.868328}

\end{thebibliography}
\bibliographystyle{spmpsci}

% Argonne Licence.
\vfill
\begin{flushright}
\framebox{\parbox{0.58\textwidth}{The submitted manuscript has been created by
UChicago Argonne, LLC, Operator of Argonne National Laboratory (``Argonne'').
Argonne, a U.S. Department of Energy Office of Science laboratory, is operated
under Contract No. DE-AC02-06CH11357. The U.S. Government retains for itself,
and others acting on its behalf, a paid-up nonexclusive, irrevocable worldwide
license in said article to reproduce, prepare derivative works, distribute
copies to the public, and perform publicly and display publicly, by or on behalf
of the Government. The Department of Energy will provide public access to these
results of federally sponsored research in accordance with the DOE Public Access
Plan. http://energy.gov/downloads/doe-public-access-plan. }}
\normalsize
\end{flushright}

\end{document}